\newcommand{\Z}{{\mathbb Z}}
\newcommand{\NN}{{\mathbb N}}
\newcommand{\CC}{{\mathbb C}}
\newcommand{\R}{{\mathbb R}}
\newcommand{\TT}{{\mathbb T}}
\newcommand{\T}{{\mathbb T}}
\newcommand{\TP}{{\mathbb{TP}}}
\newcommand{\CP}{{\mathbb{CP}}}
\newcommand{\F}{{\mathcal F}}
\newcommand{\X}{{\mathcal X}}
\renewcommand{\H}{{\mathcal H}}
\renewcommand{\S}{{\mathcal S}}
\newcommand{\Vol}{\text{Vol}}
\newtheorem{thm}{Theorem}[section]
\newtheorem{definition}[thm]{Definition}
\newtheorem{proposition}[thm]{Proposition}
\newtheorem{lemma}[thm]{Lemma}
\newtheorem{corollary}[thm]{Corollary}
\theoremstyle{definition}
\theoremstyle{definition}
\newtheorem{example}[thm]{Example}}
\newcommand{\comment}[1]{}
\newcommand{\overlim}[1]{{\buildrel{#1}\over\longrightarrow\;}}
\begin{document}
\title{Tropical $(1,1)$-homology for floor decomposed surfaces}
\author{Kristin M. Shaw}
\date{\today}

\email{shawkm@math.utoronto.ca}

\maketitle
\begin{abstract} 
The tropical $(p, q)$-homology groups of Itenberg, Katzarkov, Mikhalkin and Zharkov \cite{ItKaMiZh} are the tropical analogues of the Hodge decomposition of the cohomology of complex algebraic varieties. Following ideas of Mikhalkin, in \cite{KMS} it is shown that there is a well-defined intersection pairing on tropical $(1, 1)$-classes of a compact non-singular tropical surface. 
Here we compute directly the $(1, 1)$-homology of a non-singular floor decomposed tropical surface in tropical projective space, along with the intersection form.  
\end{abstract}

\begin{section}{Introduction}
Tropical geometry is a relatively new area of mathematics which studies polyhedral complexes equipped with integer affine structures, known as tropical varieties. One of the goals of tropical geometry is to study classical algebraic geometry via these tropical varieties. 
This has had powerful applications to the study of real and complex enumerative geometry in the case of curves in toric surfaces, due to Mikhalkin's correspondence theorem \cite{Mik1}. This theorem is in part a consequence of the fact that every tropical hypersurface in $\R^n$ can be approximated by a  limit of so-called amoebas of complex algebraic hypersurfaces in $(\CC^{\ast})^n$ \cite{V9},
\cite{Mik12}, 
\cite{Rullgard1}. There have also been generalisations of Mikhalkin's theorem for curves of higher codimension, see for examples \cite{Mik3}, \cite{Spe2}, \cite{NS}, \cite{Mik08}, \cite{Nishinou},
 \cite{Tyomkin},  \cite{Kat1}, \cite{Br6b}, \cite{Br9}.  As these generalisations suggest,  some tropical varieties in $\R^n$ may be approximated by limits of amoebas of $1$-parameter families of complex algebraic subvarieties of $(\CC^{\ast})^n$. Under suitable conditions this so-called tropicalisation retains information about the original variety. For example, enumerative invariants can sometimes be recovered, as well as in some cases intersection properties of  subvarieties  \cite{Pay1}, \cite{Rab2},  \cite{Br16}.

Introduced by Itenberg, Katzarkov, Mikhalkin and Zharkov, tropical $(p, q)$-homology provides a way of studying the topology of complex algebraic varieties via their tropicalisations. 
Tropical $(p, q)$-homology is  the homology 
of a tropical variety with coefficients in a certain co-sheaf, \cite{ItKaMiZh}. In other words, the $(p, q)$-tropical homology groups are $$H_{p, q} (X) = H_q(X, \F_p),$$ where the definition of $\F_p$ from \cite{ItKaMiZh} will be recalled  in Section \ref{sec:framings}.
The tropical $(p, q)$-homology groups are meant to be the tropical analogue of the Hodge decomposition of the cohomology of a complex algebraic variety. In particular,  assuming a family $\X_t$ and its tropicalisation  satisfy the appropriate conditions from \cite{ItKaMiZh}, the Hodge numbers $h^{p, q}$ of a member of the family $\X_t$
are given by the ranks of the tropical  $(p, q)$-homology groups. 
The reader is directed to  \cite{ItKaMiZh} for the precise relationship between the tropical homology groups and the mixed Hodge structure of the family $\X_t$.  

For a non-singular complex projective surface $\X$, the intersection form on 
$H^{1, 1}(\X)$ satisfies the Hodge index theorem. 
\begin{thm}[Hodge index theorem] \cite{Hod}
	Let $\X \subset \CP^N$ be a non-singular projective surface and  $\H \in H^{1, 1}(\X)$ denote the class of the	 	hyperplane section. Then, $$H^{1, 1}(\X) = <\H> \oplus <\H>^{\bot}$$ where the intersection form on $H^{1, 1}		(\X)$ restricted to $<\H>^{\bot}$ is negative definite.  
\end{thm}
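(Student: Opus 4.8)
The plan is to recognise this as the classical Hodge index theorem and to derive it from the Hodge--Riemann bilinear relations for the compact K\"ahler surface $\X$. First I would fix the setup: since $\X \subset \CP^N$ is smooth and projective it is a compact K\"ahler manifold of complex dimension $2$, and I take the Fubini--Study form $\omega$ restricted to $\X$ as a K\"ahler form whose cohomology class is the hyperplane section $\H$. The intersection form is cup product followed by integration against the fundamental class, $\alpha \cdot \beta = \int_\X \alpha \wedge \beta$, and I would work throughout with the real $(1,1)$-classes $H^{1,1}(\X, \RR)$, on which this form is symmetric.

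The first concrete step is to check that $\langle \H \rangle$ is a positive line: $\H \cdot \H = \int_\X \omega \wedge \omega = 2\,\Vol(\X) = \deg \X > 0$. In particular $\H$ is not isotropic, so the orthogonal projection $\alpha \mapsto \alpha - \frac{\alpha \cdot \H}{\H \cdot \H}\,\H$ is well defined and yields the internal direct sum $H^{1,1}(\X, \RR) = \langle \H \rangle \oplus \langle \H \rangle^{\bot}$, with the two summands mutually orthogonal for the intersection form. It remains only to show that the form is negative definite on $\langle \H \rangle^{\bot}$.

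For this I would invoke the Lefschetz decomposition together with the Hodge--Riemann relations. On a surface a class $\alpha \in H^{1,1}(\X, \RR)$ is primitive precisely when $\omega \wedge \alpha = 0$ in $H^4(\X)$, that is, when $\int_\X \omega \wedge \alpha = \H \cdot \alpha = 0$; hence the primitive part $P^{1,1}$ coincides exactly with $\langle \H \rangle^{\bot}$. The Hodge--Riemann bilinear relation in bidegree $(1,1)$ then asserts that for a nonzero primitive class the Hermitian form $\alpha \mapsto -\int_\X \alpha \wedge \bar\alpha$ is positive definite; specialising to a real class, where $\alpha = \bar\alpha$, gives $\alpha \cdot \alpha = \int_\X \alpha \wedge \alpha < 0$. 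Combined with the decomposition of the previous step, this is exactly the claim that the intersection form is negative definite on $\langle \H \rangle^{\bot}$, which completes the proof.

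I expect the genuine obstacle to be the Hodge--Riemann bilinear relations themselves, whose proof rests on the K\"ahler identities and the hard Lefschetz theorem and is the analytically deep ingredient; everything else is elementary linear algebra once that input and the Hodge decomposition are granted. An alternative packaging of the same content is to quote directly that the intersection form on $H^{1,1}(\X, \RR)$ has signature $(1, h^{1,1}-1)$ and then apply Sylvester's law of inertia: since $\langle \H \rangle$ already accounts for the single positive direction, its $(h^{1,1}-1)$-dimensional orthogonal complement is forced to be negative definite.
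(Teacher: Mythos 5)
Your argument is correct, but there is nothing in the paper to compare it against: this theorem appears purely as classical background, cited to Hodge's work \cite{Hod}, and the paper never proves it --- it only uses the consequence that the signature of the intersection form on $H^{1,1}$ is $(1, h^{1,1}(\X)-1)$ as motivation for the tropical analogue established in Theorem \ref{thm:H11floor}. Your derivation is the standard textbook proof (as in Griffiths--Harris or Voisin): the reduction of the direct-sum decomposition to the non-isotropy $\H \cdot \H > 0$, the identification on a surface of the primitive part of $H^{1,1}$ with $\langle \H \rangle^{\bot}$ (since primitivity in degree $2$ means lying in the kernel of $\omega \wedge \cdot \colon H^2 \to H^4 \cong \RR$), and the sign coming from the Hodge--Riemann bilinear relation in bidegree $(1,1)$ are all handled correctly, with the one implicit convention that the statement is read on real $(1,1)$-classes, which is the standard formulation. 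Your closing remark is also accurate: the analytic content sits entirely in the Hodge--Riemann relations (equivalently, in the signature statement plus Sylvester's law), and everything surrounding it is linear algebra. So the proposal is a complete and correct proof of a statement the paper treats as a black box.
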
 

It follows directly from the Hodge index theorem that the signature of the intersection form on $H^{1, 1}$ is $1-h^{1, 1}(\X)$, where $h^{1, 1}(\X) =  \dim (H^{1, 1}(\X)).$ 

Following ideas of Mikhalkin, in \cite{KMS}, it is shown that on a compact non-singular tropical surface $X$ there is an intersection product on $H_{1, 1}(X)$ when it is torsion free. 
In Section \ref{sec:H11},  we show the following for the tropical $(1, 1)$-homology groups. 

\begin{thm}\label{thm:H11floor}
A smooth tropical  floor decomposed surface $X_d \subset \TP^3$ of degree $d$  has 
$$h_{1, 1}(X_d)  = \frac{2d^3 - 6d^2 + 7 d}{3}.$$
Moreover, the signature of the intersection form on $H_{1, 1}(X_d)$ is 
$$(1+ b_2(X_d), h_{1, 1}(X_d) - 1 - b_2(X_d)).$$
\end{thm}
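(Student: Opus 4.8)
The plan is to compute $H_{1,1}(X_d) = H_1(X_d, \F_1)$ directly from the cellular chain complex associated to the polyhedral structure of $X_d$. Writing $C_{1,q}(X_d) = \bigoplus_{\dim \sigma = q} \F_1(\sigma)$, the group $H_{1,1}(X_d)$ is the homology in the middle of $C_{1,2}(X_d) \to C_{1,1}(X_d) \to C_{1,0}(X_d)$. Since $X_d$ is smooth, the local models at each cell are standard, so $\F_1(\sigma)$ has an explicit description at every facet (rank $2$), edge, and vertex, and I would first record these ranks together with the corestriction maps $\F_1(\sigma) \to \F_1(\tau)$ for $\tau$ a facet of $\sigma$. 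The essential simplification comes from the floor decomposition: the linear projection defining the floors gives a filtration of $X_d$ into $d$ floors and the walls between consecutive floors, and I would organize the cells, chain groups, and boundary maps according to this filtration.

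Rather than evaluate all ranks in one pass, I would run an induction on $d$. Passing from $X_{d-1}$ to $X_d$ amounts to attaching one new floor along a wall, and I would set up a Mayer--Vietoris (or relative) long exact sequence for this decomposition in $\F_1$-coefficients to isolate the change in $H_{1,1}$. The goal is to show that each new floor contributes exactly $2d^2 - 6d + 5$, the discrete derivative of the claimed cubic, and then conclude by checking the base case $X_1 = \TP^2$, for which $h_{1,1} = 1$. As a cross-check, and as an alternative to the exact sequence, I would compute the Euler characteristic $\chi(\F_1) = \operatorname{rk} C_{1,2} - \operatorname{rk} C_{1,1} + \operatorname{rk} C_{1,0}$ combinatorially from the floor data, determine the flanking groups $H_{1,0}$ and $H_{1,2}$ (which are governed by the connectivity of $X_d$ and by its facet classes and are comparatively easy), and solve $h_{1,1} = h_{1,0} + h_{1,2} - \chi(\F_1)$.

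For the intersection form I would use the pairing constructed in \cite{KMS}, which is computed by local combinatorial contributions, and exhibit a basis of $H_{1,1}(X_d) \otimes \R$ adapted to the floors. The natural candidates are the hyperplane class $\H$, a family of classes mapping under the eigenwave map onto a basis of the topological group $H_2(X_d; \ZZ) \cong H_{0,2}(X_d)$ of rank $b_2(X_d)$, and a complementary family of vanishing-cycle type classes supported on individual floors. In this basis I would compute the Gram matrix in block form: $\H$ contributes a positive entry, the $b_2(X_d)$ topological classes contribute a positive definite block --- this is the source of the extra positivity beyond the classical Hodge index count --- and the complementary block is negative definite. Reading off the inertia then yields the signature $(1 + b_2(X_d), h_{1,1}(X_d) - 1 - b_2(X_d))$.

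The main obstacle is the signature, and specifically the negative definiteness of the complementary block: unlike the classical setting there is no tropical Hodge index theorem available to cite, so this cannot be deduced abstractly and must instead be extracted from an explicit diagonalization of the intersection matrix in the floor-adapted basis, or from a direct sign analysis of the local contributions of \cite{KMS} floor by floor. Subsidiary technical points that I expect to require care are the local description of $\F_1$ at cells meeting the boundary divisors of $\TP^3$, and the verification that $H_{1,1}(X_d)$ is torsion free, which is needed both for the pairing of \cite{KMS} to be defined and for the signature to make sense.
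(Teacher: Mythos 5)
Your strategy for the rank---induction on the degree, peeling off one floor at a time, with a Mayer--Vietoris sequence in $\F_1$-coefficients---is the same as the paper's, but in your write-up the inductive step is an assertion rather than a computation. The entire content of that step is the tropical homology of the pieces and of their overlap, none of which you compute: the paper shows that $H_{1,0}$ of both open pieces vanishes, that the overlap (the open wall $\tilde{C}_{d-1}$, a cylinder over the plane curve $C_{d-1}$) has $h_{1,0}=g(C_{d-1})+1$ and $h_{1,1}=g(C_{d-1})$, and---crucially---that the bottom-floor piece $X_{d,d-1}$ is a tropical blow-up of $\TP^2$ at the $d(d-1)$ points of $C_d\cap C_{d-1}$, with $H_{1,1}(X_{d,d-1})\cong\Z^{d(d-1)+1}$ generated by exceptional cycles $E_k$ and a lifted line $\tilde{L}$. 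Note also that the change in $h_{1,1}$ is not just the rank of the newly attached piece: since $H_{1,0}(\tilde{C}_{d-1})\neq 0$, the connecting map $H_{1,1}(X_d)\to H_{1,0}(\tilde{C}_{d-1})$ is a split surjection and accounts for $g(C_{d-1})+1$ of the new classes (the $\beta$-cycles joining floors and the vertical cycle); your plan, phrased as ``isolate the change in $H_{1,1}$ from attaching a floor,'' does not register this term, nor the splitness and torsion-freeness checks needed before ranks can simply be added. Your Euler-characteristic cross-check has the same issue in disguise: $h_{1,0}(X_d)$ and $h_{1,2}(X_d)$ are not free---the paper needs a separate lemma even for the analogous groups of the open pieces---and $\chi(\F_1)$ requires handling the quotient coefficient groups at sedentarity cells.

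The signature is where the genuine gap lies, and you say so yourself. Two problems. First, your proposed Gram-block structure is not what one can exhibit: the natural floor-adapted classes tied to $H_{0,2}(X_d)\cong H_2(X_d;\Z)$ are the vertically framed cycles $\gamma_i$ supported over $1$-cycles of the walls, and in the paper's computation these are isotropic---all products $\gamma_i\cdot\gamma_j$ vanish---so they cannot form a positive definite block. The rank-$b_2$ worth of positivity instead arises because each $\gamma_i$ pairs hyperbolically with a dual class $\beta_i$ constructed from $H_{1,0}(C_i)$ (the form on each pair is $\left(\begin{smallmatrix}0 & 1\\ 1 & \star\end{smallmatrix}\right)$), and a hyperbolic block of rank $2b_2$ has inertia $(b_2,b_2)$; the remaining $+1$ comes from the vertical cycle. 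Second, the negative definiteness of the complementary floor-cycle block, which you correctly identify as the crux, is left unproved in your proposal (``explicit diagonalization'' is the task, not an argument). The paper's mechanism for it is exactly the blow-up picture above: each floor cycle is homologous in $X_{i+1,i}$ to a difference $E_s-E_t$ of exceptional cycles with $E_k^2=-1$ and $E_k\cdot E_l=0$ for $k\neq l$, so the floor cycles on a given floor span a negative definite lattice, and a Novikov-additivity argument plus induction on $d$ assembles the global signature. Without this identification of each floor with a blow-up of $\TP^2$---or some substitute yielding the $(-1)$-self-intersections---the signature statement is not established.
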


Note that the second Betti number of a non-singular tropical hypersurface of degree $d$ is equal to the number of interior lattice points of the standard simplex of size $d$, $\Delta_d \subset \R^3$
$$b_2(X) =  \frac{d^3 -6d^2 +11d - 6}{6}.$$
Above the signature of the intersection form is denoted $(p, n)$, where $p$ is rank of the positive definite part and $n$ the rank of the negative definite part of the form.  
Indeed, the rank coincides with the rank of $H^{1, 1}$ for a non-singular complex hypersurface of $\CP^3$ of the same degree.

Sections \ref{sec:prelim} and \ref{sec:tropHomo} set up the necessary preliminary definitions starting with tropical hypersurfaces and floor decomposed  tropical surfaces in Section \ref{sec:prelim}.  Section \ref{sec:tropHomo},  outlines the definitions of tropical homology from \cite{ItKaMiZh} in the cases we consider and the intersection product defined on $H_{1, 1}$.
For more details and the full generality of the definitions the reader is referred to \cite{ItKaMiZh}, and \cite{KMS}. 
Section \ref{sec:homo} contains the computations of both the tropical homology group $H_{1, 1}$ and its intersection form. 
Firstly Section \ref{sec:basis} constructs a collection of cycles on a non-singular surface $X_d$. Using induction on the degree of the surface and a Mayer-Vietoris argument it is shown that these cycles form a basis of $H_{1, 1}(X_d)$. Finally using this basis we determine the signature of the intersection form on $H_{1, 1}(X_d)$.  The proof of Theorem \ref{thm:H11floor} uses an induction argument which relies not only on the fact that the surface $X_d$ is non-singular but also that it is floor decomposed. 
\end{section}

\vspace{2ex}

\textbf{Acknowledgment: }
The author is greatly indebted to Grigory Mikhalkin for his insight and many helpful conversations.  
The author would also like to thank Erwan Brugall\'e, Ilia Itenberg and Johannes Rau for their useful remarks. 
\vspace{0.5cm}

\begin{section}{Preliminaries}\label{sec:prelim}

\subsection{Tropical projective space}
The tropical semi-field is  $\T = \R \cup \{-\infty\}$ equipped with operations
$$``x+y" = \max \{ x, y \} \quad \mbox{and} \quad  ``x \cdot y " = x+y. $$  Remark that the tropical multiplicative identity is $0$ and the additive identity is $-\infty$. 
Thus, $\R^n$ is the tropical tours and $\T^n$ tropical affine space. 
In $\R^n$ we fix the standard lattice $\Z^n$ along with the standard directions $u_i= - e_i$ for $1\leq i \leq n$ and  $u_0 = - \sum_{i = 1}^n u_i.$

 Tropical projective space, as defined by Mikhalkin in \cite{Mik3}, is 
 obtained in
   accordance with classical geometry.     
 It is equipped with tropical homogeneous coordinates
  $$[x_0: \dots : x_N] \sim [x_0 +a : \dots : x_N +a]$$ where $x_i \not = -\infty $ for some $0\leq i \leq N$, and $a \in \R$. Moreover, it is covered by the affine charts 
 $$U_i = \{ [x_0: \dots : x_N] \ | \ x_i = 0 \}  \cong \T^N.$$ Notice also that tropical projective space may be obtained from tropical affine space $\T^n$ by adding a copy of $\TP^{n-1}$ at ``infinity". 
 
 The boundary of $\TP^n$ consists of $n+1$ copies of $\TP^{n-1}$, 
 which we call  boundary hyperplanes. A boundary hyperplane satisfies $H_i = \{ x \in \TP^n \ | \ x_i = -\infty\}$ where  $[x_0: \dots : x_n]$ are the homogenous coordinates. 
The boundary hyperplanes are in correspondence with the $n+1$ standard directions $u_i$. 
Moreover, tropical projective space comes equipped with a natural stratification using the notion of sedentarity in $\T^n$ due to Losev and Mikhalkin \cite{Mik3}. 

\begin{definition}
Let  $x$ be a point in $ \TP^n$ with homogeneous coordinates $[x_0 : \dots : x_n]$ then the sedentarity of $x$ is the set
$$S(x) = \{i \in \{ 0, \dots , n\} \ | \ x_i = -\infty\} \subset \{ 0, \dots n \}.$$
\end{definition}
The above definition does not depend on our representation of $x$ in homogeneous coordinates since adding a  constant in $\R$ cannot change that $x_i = -\infty$. Note that a point has  sedentarity $\emptyset$ if and only if it is contained in the tropical torus $\R^n \subset \T^n$. 

\subsection{Tropical hypersurfaces in $\TP^N$}

\begin{figure}
\begin{center}
\includegraphics[scale=0.25]{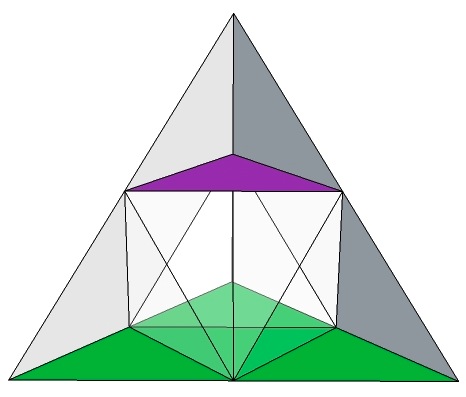}
\hspace{1.5cm}
\includegraphics[scale=0.35]{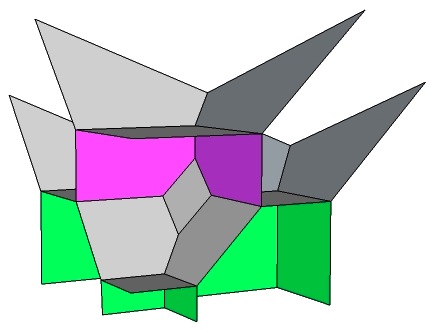}
\end{center}
\caption{A subdivision of the size two simplex and a dual floor decomposed tropical surface of degree 2.}
\label{fig:floorSub}
\end{figure}

Tropical hypersurfaces in $\R^n$ have appeared in various places, \cite{Kap}, \cite{St2}, \cite{Mik3}. Let $\Delta_d \subset \R^n$ be the $n$-simplex of size $d$, for some positive integer $d$.  
For our consideration of surfaces in $\TP^3$ we have $$\Delta_d = Conv \{ (0 ,0, 0), (d, 0, 0),  (0, d, 0), (0, 0, d) \} \subset \R^3.$$
A tropical polynomial is a rational piecewise affine convex function on $\R^n$ given by, 
$$f(x) = ``\sum_{\alpha \in \Delta \cap \Z^n} a_{\alpha}x^{\alpha} \ " = \max_{\alpha \in   \Delta \cap \Z^n}  \{ \alpha + \langle \alpha , x \rangle \}, $$
where $\langle \  , \rangle$ denotes the standard inner product on $\R^n$. 

A tropical polynomial defines a regular subdivision of the lattice polytope $\Delta$, denote it by $\S_f$, see \cite{St2}, \cite{Spe1}, \cite{Mik3}.
The tropical hypersurface $V(f) \subset \R^n$ of such a polynomial $f$ is a polyhedral complex corresponding to the locus of non-differentiability of the function $f$, and equipped with positive integer weights on its facets, again see  \cite{St2},  \cite{Spe1}, \cite{Mik3}.
The tropical hypersurface  $V(f) \subset \R^n$ is dual to the regular subdivision $\S_f$. 
It has degree $d$ if it is dual to a subdivision of $\Delta_d$. 
A subdivision $\S$  of the $n$-simplex $\Delta_d \subset \R^n$ is \textbf{primitive} if each top-dimensional polytope $\Delta^{\prime} \in \S$ has 
$\Vol (\Delta^{\prime}) = \Vol(\Delta_1)$, where $\Delta_1$ is the standard $n$-simplex of size $1$. A tropical hypersurface $V(f) \subset \R^n$ is \textbf{non-singular} if its corresponding dual subdivision $\S_f$ is primitive.

Taking the closure of a tropical hypersurface $V(f) \subset \R^n$ inside tropical projective space gives a non-singular compact tropical surface  $\overline{V(f)} \subset \TP^n$. To summarise we have the following definition. 
 
 \begin{figure}
\begin{center}
\begin{tabular}{|c|c|c|c|}
\hline
& $x \in X^{(0)}$ & $x \in X^{(1)} \backslash X^{(0)}$ & $x \in X^{(2)} \backslash X^{(1)}$ \\ \hline
$s(x) = 0$ & \includegraphics[scale= 2]{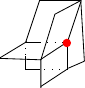} & \includegraphics[scale= 2]{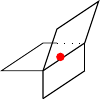} &\includegraphics[scale= 2]{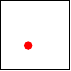}    \\ \hline
$s(x) = 1  $  &  \includegraphics[scale= 2]{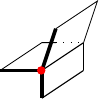} & \includegraphics[scale= 2]{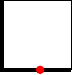} &  \\ \hline
$s(x) = 2$ & \includegraphics[scale= 2]{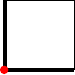} & &  \\ \hline
\end{tabular}
\end{center}
\caption{The neighborhoods of points of a non-singular surface $X \subset \TP^3$ up to integer affine transformation. The bold lines represent points of sedentarity. } 
\label{tab:neigh}
\end{figure}

\begin{definition}
A non-singular tropical hypersurface $X \subset \TP^n$ is the closure of a tropical  hypersurface $V(f) \subset \R^n$ which is dual to a primitive subdivision of the standard  $n$-simplex of size $d$, $\Delta_d \subset \R^n$. 
\end{definition}

When $n = 3$, if a tropical hypersurface is non-singular then the possible neighbourhoods of points in $X$ are shown in Figure \ref{tab:neigh}. 
Notice that from a tropical hypersurface $X \subset \TP^n$ defined by a polynomial $f$ we may uncover the dual subdivsion $\S_f$, for this reason we use interchangeably the  notation $\S(X)$ and $\S_f$ for the dual subdivision. 

\subsection{Floor decomposed surfaces in $\TP^3$.} \label{sec:floor}

Let $\Delta_d \subset \R^3$ be the standard $3$-simplex of size $d$ and consider the affine  planes $Z_k  \subset \R^n$ given by the linear form $\langle x, e_3 \rangle = k$.  The intersection $Z_{d-k} \cap \Delta_d$ for $1 \leq k \leq d$ is 
 a two dimensional  simplex of size $k$, denote it by $\Delta_d(k)$.

\begin{definition}
A non-singular tropical hypersurface $X \subset \TP^n$  is \textbf{floor decomposed} if the dual subdivision $\S(X)$ induces a primitive subdivision on $\Delta_d(k)$  for every $1\leq k \leq d$, (see Figure \ref{fig:floorSub}). 
\end{definition}

The study of floor decomposed tropical hypersurfaces is originally due to Mikhalkin. For $n=2$, floor decomposed curves  have  been used in applications to tropical enumerative geometry, by way of floor diagrams, see for example \cite{Br6b}, \cite{Br7}. 
Their extra combinatorial structure makes them suitable for recursive arguments as we shall see. 

\vspace{0.3cm}
A floor decomposed tropical surface in $\TP^3$ has a very nice decomposition as its name suggests. 
Removing all open faces containing the vertical  direction from a floor decomposed surface $X \subset \TP^3$ there are $d$ connected components, where $d$ is the degree of the surface. These are called the \textbf{floors}. We denote the floor  dual to the part of the subdivision $\mathcal{S}(X)$ laying between hyperplanes $x_3 = d- i$ and 
$x_3=d - i- 1$ by $F_{i+1, i}$.  The floors are two dimensional rational polyhedral complexes however they do not satisfy the balancing condition, which is  familiar  in  tropical geometry, see \cite{St2},  \cite{Spe1}, \cite{Mik3}.  
Two adjacent floors, $F_{i, i-1} $ and $ F_{i+1, i}$,  are joined  by  \textbf{walls}. A wall of $X$ is a connected component of the complement of the floors.   
Figure \ref{fig:walls} shows the floors and walls of the quardric surface  from Figure \ref{fig:floorSub}.
Topologically, the wall  between $F_{i, i-1} $ and $ F_{i+1, i}$ is a  cylinder over a tropical curve $C_i \subset \TP^2$ of degree $i$.
The curve $C_i \subset \TP^2$  is dual to the subdivision $\mathcal{S}(X)$ restricted to the simplex $\Delta_d(k)$ for $k = d-i$ and it is defined by a tropical polynomial $f_i(x_1, x_2)$, obtained  by restricting the polynomial $f(x_1, x_2, x_3)$ defining the surface $X$ to 
the above $2$-simplex and substituting $x_3 = 1_{\T} = 0$.  Denote by $\tilde{C}_i$ the open portion of the surface $X$ corresponding to such a wall.

\begin{figure}
\begin{center}
\hspace{.5cm}
a) \includegraphics[scale=0.35]{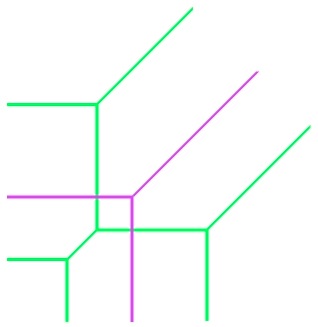}
\hspace{0.1cm}
b)\includegraphics[scale=0.4]{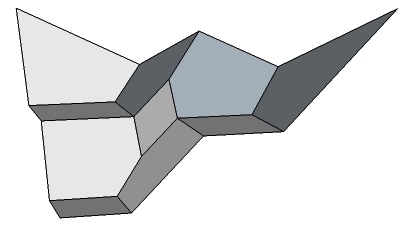}
\vspace{1cm}
\\
c) \includegraphics[scale=0.6]{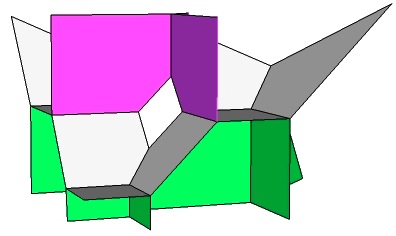}
\end{center}
\caption{a) The floor plan of the degree two floor decomposed surface from Figure \ref{fig:floorSub}. b) The floor corresponding floor $F_{ 2,1}$. c) The surface $X_{2,1}$.}
 \label{fig:walls}
\end{figure}

\begin{definition}
A floor plan for a surface is a collection of tropical plane curves $\{ C_1 , \dots, C_d \}$,  $C_i \subset \TP^2$, such that:
\begin{enumerate}
\item $C_i$ is dual to a primitive subdivision of $\text{Conv}\{(0, 0), (i, 0), (0, i)\}$ for $1 \leq i \leq d$. 
\item for $1 \leq i \leq d-1$,  $C_i$ intersects $C_{i+1}$ in  $i(i+1)$ points contained in the interior of edges of both $C_i$ and $C_{i+1}$.  
\end{enumerate}
\end{definition}

Part a) of Figure \ref{fig:walls}, shows two curves of the floor plan $\{ C_1 , C_2 \} \subset \TP^2$.
A floor decomposed surface $X \subset \TP^3$ determines a floor plan.  Each curve $C_i \subset \TP^2$ is the image of a wall joining floors $F_{i, i-1}$ and $F_{i+1, i}$  under  the linear projection in the vertical  direction, as mentioned above. 
Conversely, given a  floor plan $\{C_1, \dots , C_d\}$, using a pair of curves $C_i , C_{i+1} \subset \TP^2$, we may construct a floor $F_{i+1,i} \subset \TP^3$ of a floor decomposed surface $X$, up to a translation in the vertical direction, in the following way. 
A curve $C_i \subset \TP^2$ is  a tropical hypersurface and so it is given by a tropical polynomial $f_i$. 
The difference $f_{i+1} - f_i$ gives a tropical rational function. 
For a real constant $a \in \R$, the floor
 $F_{i+1, i}$ is simply the graph of $\TP^2$ along the function $a + f_{i+1} - f_i$.
 If the real constants are properly chosen, the graphs corresponding to adjacent floors may be joined via vertical faces and the result is a floor decomposed tropical surface of degree $d$.  Therefore, a floor plan of degree $d$ determines a tropical surface up to the height of the walls joining the adjacent floors.

For  a surface $X_d \subset \TP^3$ of degree $d$, for $i < d-1$, let $X^o_{d,d-1} \subset \TP^3$ be an open neighbourhood of the floor $F_{d, d-1} \subset \TP^3$ and also containing the boundary hyperplane corresponding to $x_3 = -\infty$.  
 Also by removing an open neighbourhood of the bottom floor $F_{d, d-1}$ from $X_d$ we obtain a set  $X_{d-1}^o$. This is an open subset of a  floor decomposed surface $X_{d-1} \subset \TP^3$ with the boundary component $X_{d-1} \cap H_3$ removed. 
Moreover we choose these open sets so that $$X_d = X^o_{d-1} \cup X^o_{d, d-1} \qquad \text{and} \qquad X^o_{d-1} \cap X^o_{d, d-1} = \tilde{C}_{d-1}.$$
Also, $X^o_{d, d-1}$ is an open subset of a tropical surface $X_{d, d-1}$ inside a tropical toric variety $\T(\Delta)$. The $3$-dimensional tropical toric variety is defined by the polytope $$\Delta = Conv \{ (0, 0, 0), (d, 0, 0), (0, d, 0), (d-1, 0, 1), (0, d-1, 1), (0, 0, 1) \}.$$
The surface $X_{d, d-1}$ is the closure in $\T(\Delta)$ of the modification of $\TP^2$ along the function $f_{d} -f_{d-1}$. 
It has $5$ boundary curves, two of which are tropical curves $C_d$, $C_{d-1}$ mentioned above.
See part c) of Figure \ref{fig:walls} for  an example of a surface $X_{2, 1}$.
Both $X_{d-1}$ and $X_{d, d-1}$ are non-singular hypersurfaces of tropical toric varieties. 
 These surfaces  will be used in Section \ref{sec:homo} along with a Mayer-Vietoris sequence to find  $H_{1, 1}(X_d)$  and the intersection form. The computation of   $H_{1, 1}(X_d)$ and its intersection form requires not only that $X_d$ be non-singular but that $X_d$ admits this decomposition.

\end{section}

\begin{section}{Tropical $(p,q)$-homology}\label{sec:tropHomo}

The general definitions of tropical $(p, q)$-homology are given in \cite{ItKaMiZh}.  Here we provide the definitions only in the setting in which we require, namely for non-singular tropical  hypersurfaces in $\TP^n$ and subsets thereof. In fact, we only consider cases for $n \leq 3$.   

\begin{subsection}{The framing groups}\label{sec:framings}
For every point $x \in X$ there is a collection of groups $\F_p(x)$ for $p \in \NN$. These will be the coefficient groups for the $(p, q)$-cells to be defined shortly. 
Define the \textbf{star} of $X$ at $x$ to be the polyhedral complex,  $$Star_x(X) = \{ v \in \R^n \ | \ \exists \epsilon >0, \ s.t. \ \forall 0 < \delta < \epsilon,  x + \delta v \in X\}.$$ This complex contains a single vertex and may be equipped with a fan structure. 
 
 \begin{definition}
Let $X \subset \TP^n$ be a pure dimensional, rational,  polyhedral complex equipped with weight one on the facets and satisfying the balancing condition. 
For  each point $x \in X$ define a collection of  groups $\F_{p}(x)$. Firstly, $\F_0(x) = \Z$, and for $p >0$ define   
\begin{itemize}

\item{If $x \in X$ is a point of sedentarity $\emptyset$, then 
$$\mathcal{F}_p(x) =  < v_1 \wedge \dots \wedge v_p \ | \ v_1, \dots, v_p \in \tau  \subset Star_x(X) \cap \Z^n \ > , $$  
where $\tau $ is any face of $Star_x(X)$.}

\item{If $x \in X$  is a point of sedentarity $I$, $\mathcal{F}_{p}(x)$ is a quotient of the above construction. More precisely, $$\mathcal{F}_p(x) = \frac{< v_1 \wedge \dots \wedge v_k \ | \ v_1, \dots, v_k \in \tau \subset Star_x(X) \cap \Z^n \ > }{ <u_i \ | \ i \in I >}, $$  where $u_{i}$ are the standard directions from Section \ref{sec:prelim}. }
\end{itemize}
\end{definition}

 It is clear that if two points $x, y \in X$ are contained in the same open face of $X$ then $\F_p(x) = \F_p(y)$ for all $p \in \NN$, therefore for each open face $\tau \subset X$ we have groups $\F_p(\tau)$. 

The groups also come equipped with ``inclusion" mappings.  
If $\sigma \subset \overline{\tau}$ are faces of $X$ then there are  maps $i_{\tau, \sigma}: \F_p(\tau) \longrightarrow \F_p(\sigma)$, if $\sigma$ and $\tau$ are both of the same sedentarity then $i_{\tau, \sigma}$ is an inclusion. If the sedentarities of the two faces are not equal, then $i_{\tau, \sigma}$ is a quotient map. 
Denote by $\F_p(X)$ the collection of all of the groups $\F_p(\tau)$ for all open faces $\tau$ of $X$. 
\end{subsection}

\begin{subsection}{Tropical $(p, q)$-homology}
We restrict to recalling the definitions  tropical  homology from \cite{ItKaMiZh} on spaces contained in $\TP^n$. 
Once again, the tropical $(p, q)$-homology group of a space $X$ is essentially the $q$-homology group of $X$ with coefficients in $\F_p(X)$. 

Throughout this section $X \subset \TP^n$ is an open subset of a tropical hypersurface $X_d \subset \TP^n$. All faces of $X$ will be considered to be open unless otherwise stated. A simplicial $q$-cell $a$ is said to respect the polyhedral structure of $X$ if every face of $a$ is contained in a single face of $X$. 
  For an open face $\tau \subset X$, let $C_q(\tau)$ denote the group of singular $q$-chains contained in the closure of $\tau$ with coefficients in $\Z$.  A $q$-cell contained in the face $\tau$ may be equipped with a framing coefficient in $\F_p(\tau)$, this is a $(p, q)$-cell. A $(p, q)$-cell in $\tau$  will sometimes be denoted by $(\phi, a)$ where $\phi \in \F_p(\tau)$ and $a$ a $q$-cell.  A   $(p, q)$-chain in the face $\tau$ is an element of 
$C_{p, q}(\tau) = C_q(\tau) \otimes_{\Z} \F_p(\tau)$. 
Finally, the group of $(p, q)$-chains of $X$ is 
$$C_{p, q}(X) = \bigoplus_{\tau \subset X} C_{p, q}(\tau).$$

The inclusion maps given at the end of the last section  permit the extension of the usual boundary map of $q$-simplicial chains to obtain a boundary map of $(p, q)$-chains. 
This also relies on the fact that the simplicial base of  a $(p, q)$-chain respects the polyhedral structure of $X$. Notice that the boundary map only decreases the $q$-dimension in the chain complex, i.e.~we have $\partial_{p, q}: C_{p, q}(X)  \longrightarrow C_{p, q-1}(X)$. 

As usual, a  $(p, q)$-\textbf{cycle} is an element of the kernel of $\partial_{p, q}$. The $(p, q)$-cycles of $X$ will be denoted $Z_{p, q}(X)$.  A $(p, q)$-\textbf{boundary} is an element of the image of $\partial_{p, q+1}$ 
and the homology groups are: $$H_{p, q}(X) = \frac{Ker(\partial_{p, q})}{Im(\partial_{p, q+1})}.$$ If two $(p, q)$-cycles $\alpha$, $\alpha^{\prime}$ are equivalent homology classes  write $\alpha \sim \alpha^{\prime}$.

\begin{example}[Tropical homology of a curve in $\TP^2$]\label{ex:homocurve}
This example calculates the $(p, q)$-homology groups of a non-singular tropical curve $C \subset \TP^2$. This will also be applied when  finding the tropical $(1, 1)$-homology of a floor decomposed surface in Section \ref{sec:H11}.

A non-singular tropical curve $C \subset \TP^2$ is dual to a primitive subdivision of the simplex $$\Delta_d = \{ (0, 0), (d, 0), (0, d) \}.$$ The curve $C$  is a  graph with first Betti number $g = (d-1)(d-2)/2$.  Because the curve is one dimensional all $(p, q)$ groups besides $H_{1, 1}, H_{1, 0}, H_{0, 1}$ and $H_{0, 0}$ are zero. If $p = 0$ then $\F_p(x) = \Z$ therefore $H_{0, 0}(C) = \Z$ and $H_{0, 1}(C) = \Z^g$.

The group $H_{1, 0}(C)$ consists of classes of points $x$ equipped with a framing vector $\F_1(x)$. 
At a leaf $x$ ($1$-valent vertex) of $C$ the framing group $\F_1(x)$ is  $0$. If $C$ is a tree then for a $1$-framed point $(\phi, x)$ on $C$ we may find a $(1, 1)$-chain with boundary supported on $x$ and the leaves of $C$, so  $x \sim 0$. 
If $C$ is not a tree, 
let $x_1, \dots ,  x_g$  be a collection of framed breaking points of $C$. Recall that a breaking point means that each $x_i$ is in the interior of an edge of $C$ and $C \backslash \{x_1, \dots , x_g\}$ is a connected tree. From $C \backslash \{x_1, \dots , x_g\}$, construct a tree $C^{\prime}$ by adding  vertices each of the $2g$ open edges obtained after removing the breaking points. 
Moreover, declare $\F_1$ at each of these new vertices to be  $0$.
The framing of a point $x_i$ is a choice of primitive integer vector parallel to the edge of $C$ containing the point.  Now since $C^{\prime}$ is a tree, for any $1$-framed point we may find a $(1, 1)$-chain whose boundary as a singular $1$-chain is supported on that point and  the leaves of $C^{\prime}$. Therefore, any $1$-framed point is equivalent to a $(1 ,0)$-cycle supported on the points $x_1, \dots , x_g$ equipped with some framings. 
In fact these form a basis of $H_{1,0}(C)$. To see this, suppose there is a  $(1, 1)$-chain $\tau$ in $C$ bounding a linear combination of the framed points $x_1, \dots , x_g$. Then $\tau$ gives a  $(1, 1)$-cycle on the tree $C^{\prime}$ since  $\F_1(x) = 0$ for all leaves, including those corresponding to breaking points. As a cycle in $C^{\prime}$ denote $\tau$ by the same name. 
Since $C$ is non-singular, if $\tau$ contains an edge adjacent to a vertex $v$, then $\tau$ must also contain the other two edges adjacent to $v$, because every non-leaf vertex is trivalent, and $\partial \tau = 0$. Moreover the framings of the other two edges adjacent to the vertex are determined. Therefore, since $C^{\prime}$ is connected, the support of $\tau$ must be the entire curve and $\tau$ must be a multiple of the ``parallel cycle" $C^{\prime}$, see Section \ref{sec:cycmap}.  Now $\tau$ considered as a $(1, 1)$-chain on $C$ still has empty boundary. Therefore, the framed points $x_1, \dots , x_g$ form a basis of $H_{1, 0}(C)$. 
We remark that if $C$ is a singular curve, the above framed points would not necessarily be independent, since the curve  $C^{\prime}$ could have other $(1, 1)$-cycles.  

There is a way to construct paired bases for  $H_{1, 0}(C)$ and $H_{0, 1}(C)$. 
Starting from a collection of framed breaking points $x_1, \dots, x_g$ we can construct a  basis of $H_{0, 1}(C)$. Adding back to $C^{\prime}$ a single breaking point $x_i$ we obtain a curve with first Betti number $1$. For $1 \leq i \leq g$, let $\gamma_i$ be the unique embedded loop oriented coherently with the framing of $x_i$. This gives a basis of $H_{0,1}(C)$ dual to the basis of $H_{1, 0}(C)$ determined by the breaking points. 
This descends to  an intersection pairing on the homology groups
$$\langle \ \  , \  \ \rangle : H_{1, 0}(C) \times H_{0, 1}(C) \longrightarrow \Z.$$
Here we will not use this pairing but only the bases construction above. 

Finally, the group $H_{1, 1}(C)$ is generated by the $(1,1)$-cycle that is supported on  the entire curve $C$. Choose any orientation on the edges of $C$ and then equip each edge $e$ of $C$ with the framing given by  the primitive integer vector in the direction of $e$ and whose orientation is coherent with the choice of orientation of the edge $e$. Once again, this is because the curve $C$ is non-singular. 
\end{example}

\end{subsection}

\begin{subsection}{Intersection of $(1, 1)$-cycles}
The idea of intersecting tropical $(p, q)$-cycles under appropriate conditions is due to Grigory Mikhalkin. In 
 \cite{KMS} it is shown that $(1, 1)$-classes may be intersected on any non-singular compact tropical  surface when $H_{1, 1}(X)$ is torsion free. 
Therefore there is 
a symmetric bilinear pairing on $(1, 1)$-homology classes $$. : H_{1, 1}(X) \times H_{1, 1}(X) \longrightarrow \Z.$$ 
The next definitions recall the case of transversally  intersecting cycles, referring the reader to \cite{KMS} for the proofs that the intersection product is well-defined on $H_{1, 1}$.  

\begin{definition}
Let $X$ be a non-singular tropical surface,  two $(1, 1)$-cycles   $\alpha, \beta$ in $X$ intersect transversally in the underlying $1$-chains meet transversally in interior of $1$-cells of $\alpha$ and $\beta$. In addition all points of intersection must be contained in the interior of top dimensional faces of $X$.
\end{definition}

When two $(1, 1)$-cycles intersect transversally in $X$ the following defines their intersection multiplicity. 
\begin{definition}\label{transInt}
Let $\alpha, \beta$ be two transversally intersecting $(1, 1)$-cycles in $X$.
\begin{enumerate}

\item Suppose the point  $x \in \alpha \cap \beta$ is contained in the facet $F \subset X$ and also the interior of the framed one cells $(\phi_{\alpha}, a)$ and $(\phi_{\beta}, b)$ where $a$ and $b$ are primitive chains. Then the intersection multiplicity of $\alpha$ and $\beta$ at $x$ is:
$$m_x = (-1)^{\delta} [\Lambda_F: \Lambda_{\phi_{\alpha}} \oplus \Lambda_{\phi_{\beta}}],  $$ where $\delta =0$ if the orientation of the face $F$ induced by $a, b$ is the same as that induced by $\phi_a, \phi_b$ and $\delta = 1$ if the orientations are opposite.  

\item The intersection product of the two cycles is, 
$$\alpha . \beta = \sum_{x \in \alpha \cap \beta} m_x.$$

\end{enumerate}
\end{definition}

\begin{figure}
\begin{center}

\includegraphics[scale=0.35]{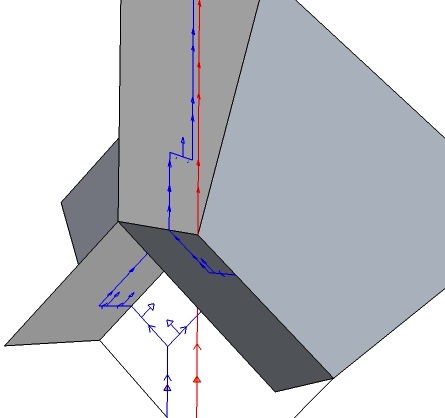} 
\put(-15, 25){$X$}
\put(-110, 80){$\beta$}
\put(-83, 115){$\alpha$}
\hspace{0.5cm}
\includegraphics[scale=0.18]{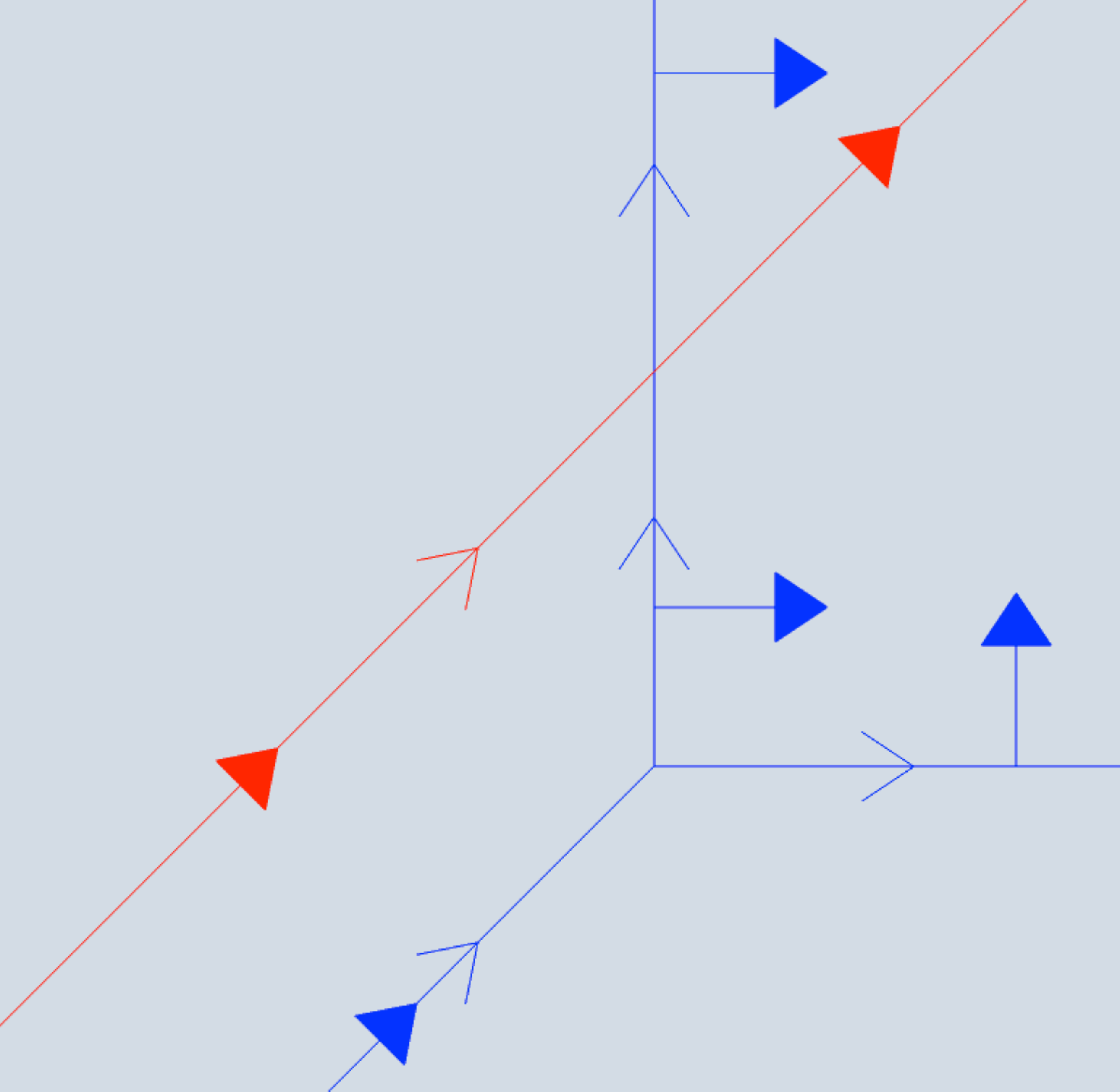}
\put(-55, 85){$x$}
\put(-70, 20){$\beta$}
\put(-93, 45){$\alpha$}

\end{center}
\caption{a) Two $(1, 1)$ homology cycles in a plane. b) A close up of their only point of intersection. Solid arrows denote their framings and the other arrows the orientations of the cells. \label{11Int}} 
 \label{fig:-1int}
\end{figure}

As previously mentioned, it is shown in \cite{KMS} that for a compact non-singular tropical surface $X$, we may always find representatives of $(1, 1)$-classes intersecting transversally and the intersection product defined above does not depend on our choice of representatives in the case when $H_{1, 1}(X)$ is torsion free. 

\begin{example}\label{negInt}
Consider the two  $(1, 1)$-cycles shown in a portion of a tropical surface $X$ in Figure \ref{fig:-1int}.  The cycle $\alpha$ is supported on an affine line in the direction $(0, 0, 1)$ and has framing parallel to and oriented in the same way as the underlying simplicial chain. The framings of the cycle  $\beta$ are indicated with solid arrows and the other arrows indicate the  orientation of the underlying chains. The two cycles intersect transversally at the point $x$, which is  in the interior of the facet of the surface generated by the vectors $(-1, 0, -1)$ and $(0, -1, -1)$. At the point of intersection the framing vectors of $\alpha$ and $\beta$ are $(0, 1, 1)$ and $(0, 0, 1)$ respectively. Taking the orientations of the cycles into account, by Definition \ref{transInt} the intersection multiplicity of $\alpha$ and $\beta$ at $x$ is then $-1$. 
In Section \ref{sec:form} the above cycles will reappear in the compact surface $X_{i+1, i}$. Here we have $\alpha \sim \beta$ and their only point of intersection is shown in Figure \ref{fig:-1int}, so that $\alpha^2 = \alpha . \beta = -1$. 
\end{example}

If a  $(1, 1)$-cycle is supported on a collection of rational sloped  $1$-chains and the framing vector  of each edge is a coherently oriented integer vector in the direction of the edge, then we say that the $(1, 1)$-cycle is a \textbf{parallel cycle}. The cycle $\alpha$ from Example \ref{negInt} is a parallel cycle. 
Two parallel $(1, 1)$-cycles intersecting transversally in the interior of a facet of a surface always have a positive intersection multiplicity.

\end{subsection}

\begin{subsection}{Tropical $1$-cycles and the cycle map}\label{sec:cycmap}
Tropical $k$-cycles predate and are different from $(p, q)$-cycles \cite{Mik3}, \cite{St2}.  They are meant to be analogous to algebraic, rather than topological cycles. 
From the authors of \cite{ItKaMiZh} there is a cycle map which represents a  tropical $k$-cycle by a parallel $(k, k)$-cycle. 
This map is analogous to the cycle map in classical algebraic geometry, which produces from a algebraic $k$-cycle a class in $H^{k, k}(\X) \subset H^{2k}(\X)$, for an algebraic variety $\X$ with sufficiently nice properties, see Chapter 19 of \cite{FulInt}. We recall the definition of tropical $1$-cycles and explain the cycle map from \cite{ItKaMiZh} in dimension $1$.

\begin{definition}\label{1cyc}
A tropical $1$-cycle $A \subset \R^n$ is a one dimensional rational polyhedral complex equipped with integer weights  on its edges and satisfying to the balancing condition at every vertex $v \in A$:
$$ \sum_{v \in e} w_{e}v_e = 0 ,$$
where $w_e$ is the weight of an edge $e$ and $v_e$ is the primitive integer vector in the direction of $e$ pointing outward from the vertex $v$. 

A sedentarity order $0$ tropical $1$-cycle $A \subset \TP^n$ is the closure of a tropical $1$-cycle $A^o$ in $\R^n \subset \TP^n$. 
\end{definition}

A tropical $1$-cycle $A \subset \TP^n$ yields a parallel  $(1, 1)$-cycle as defined after Example \ref{negInt} via the cycle map $$Cyc: Z_1(X) \longrightarrow Z_{1, 1}(X).$$ This map is also due to the authors of  \cite{ItKaMiZh}. 
To obtain the $(1, 1)$-cycle, $Cyc(A)$, first orient every edge of $A$, this is the $1$-chain supporting $Cyc(A)$. Then the framing of an oriented edge $e$ is the primitive integer vector parallel to the edge and coherent with the previously chosen orientation of that edge. Finally, each $(1, 1)$-cell from an edge is multiplied by the integer weight  of the corresponding edge in the $1$-cycle $A$.  It is easy to see that the resulting  $(1, 1)$-chain is closed due to the balancing condition at the vertices given in Definition \ref{1cyc}.

Given a pair of  tropical $1$-cycles in a non-singular surface $X \subset \TP^3$ which are not contained in the boundary their intersection product is a well defined $0$-cycle
  supported on the points in $(A \cap B)^{(0)}$, and denoted $A.B$. Once again, the points of $A.B$ come equipped with integer multiplicities which are  determined only by local data and are fully described in \cite{MichPollo} as well as earlier in \cite{Shaw} in the sedentarity zero case.  It is shown in \cite{KMS} that the sum of all multiplicities of points in  $A.B$ is equal to $Cyc(A).Cyc(B) \in \Z$. Therefore, the total intersection multiplicity of two $1$-cycles may be calculated by intersecting the $(1,1)$-cycles $Cyc(A)$ and $Cyc(B)$, justifying the use of the same intersection symbol for both products.

\end{subsection}

\end{section}

\begin{section}{Tropical $(1, 1)$-homology for floor decomposed surfaces} \label{sec:homo}

This section computes the tropical $(1, 1)$-homology groups of a floor decomposed surface  $X_d$.
This is done first by explicitly constructing a collection of cycles on $X_d$. Then a  Mayer-Vietoris type argument is used to prove that these cycles form a basis of $H_{1, 1}(X_d)$. 

\subsection{Cycles on a floor decomposed surface}\label{sec:basis}

\begin{figure}
\begin{center}
\includegraphics[scale=0.3]{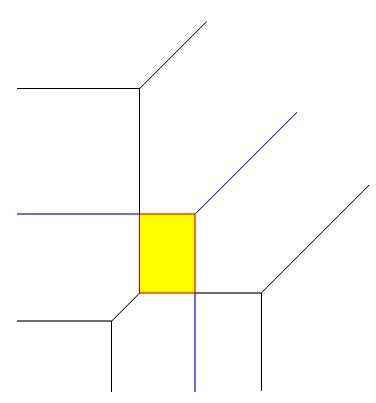}
\hspace{2cm}
\includegraphics[scale=0.6]{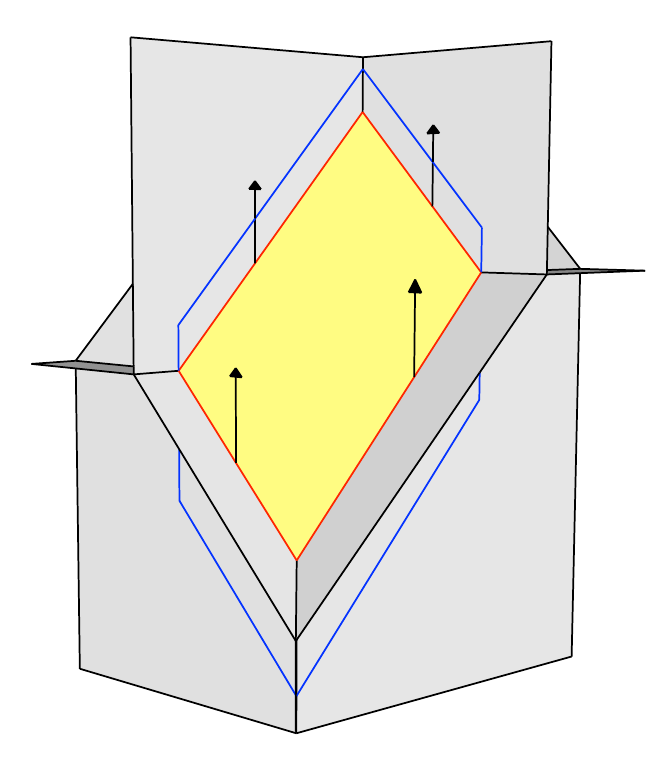}
\end{center}
\caption{ A $(1, 1)$-floor cycle on a surface $X_{2, 1}$}
\label{fig:floorcyc}
\end{figure}

\begin{definition}\label{def:floorcycle}
A \textbf{floor cycle} on $X_d \subset \TP^3$ (respectively $X_{ d, d-1}^o, X_{d-1}^o \subset 
  \T(\Delta)$) is any  simplicial $1$-cycle supported on the $1$-skeleton of a floor $F_{ i+1, i}^{(1)}$ and equipped with a constant vertical framing. 
\end{definition}

The right hand side of Figure \ref{fig:floorcyc} shows a floor cycle on the surface $X_{2, 1}$. The right hand side of the same figure shows the cycle supported on the floor plan.
Notice that without the framing these cycles are homologous to zero in $H_1(X_{d}, \Z)$.  
Next we describe a collection of floor cycles which will form part of the basis of $H_{1,1}(X_d)$. 

\subsubsection{Cycles on floors} \label{subsubfloors}
On every floor $F_{i+1, i}$ there are $i(i+1)$ points in the 0-skeleton $F_{i+1, i}^{(0)}$ which arise as intersection points of the curves $C_i, C_{i+1}$ in the floor plan of $X_d$. Label these points $x_1, \dots,  x_{i(i+1)} \in F_{i+1, i}^{(0)}$. For any pair of points $x_s, x_t$ there is a path in $F_{i+1, i}^{(1)}$ connecting them contained entirely in $C_i$  and similarly for $C_{i+1}$. These paths can be concatenated and equipped with an orientation and a vertical framing to produce a $(1, 1)$-floor cycle on $F_{i+1, i}^{(1)}$.  Figure \ref{fig:floorcyc} shows a floor cycle in $X_{2, 1}$. 
For $1 \leq s < i(i+1)$ consider the pairs of points  $(x_s, x_{s+1})$ on the floor  $F_{i+1, i}^{(0)}$. By the above construction these yield $i(i+1) -1$  floor cycles for each floor.  Denote this collection of $(1, 1)$-cycles by $A$. There are exactly 
$$\sum_{i=1}^{d-1} i^2 + i -1 = \frac{d^3 - 4d + 3}{3}$$
such cycles on the surface $X_d$ for $d > 1$. For $d = 1$ there are no such cycles.

\begin{figure}\label{fig:betagamma}
\includegraphics[scale=0.5]{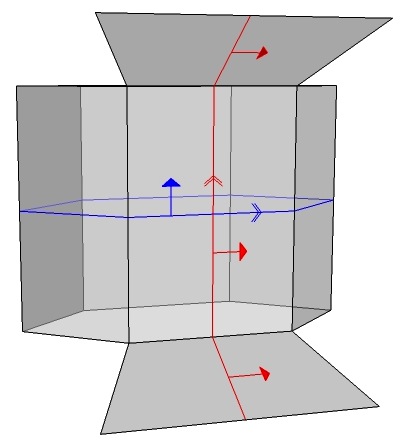}
\includegraphics[scale=0.4]{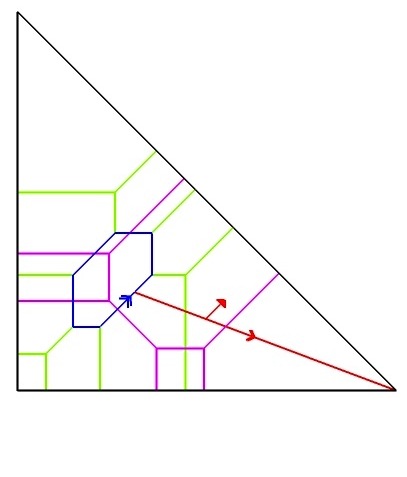}
\put(-40, 20){$F_{3, 2}$}
\put(-295, 105){$\gamma$}
\put(-265, 28){$\beta$}
\put(-53, 40){\small{$\beta$}}
\put(-100, 87){\small{$\gamma$}}
\put(-45, 82){$C_2$}
\put(-130, 20){$C_3$}
\caption{The left shows the cycles $\beta$, $\gamma$ in a part of a quartic surface $X_4$. On the right is the view of the cycles on the floor $F_{3, 2} \subset X_4$. \label{fig:alphabeta}}
\end{figure}

\subsubsection{Cycles between floors}\label{subsubwalls} 
The next collection of cycles are also floor cycles but which are free to move between two floors. Two adjacent floors $F_{i+1, i}, F_{i, i-1}$ are connected by a wall corresponding to the curve $C_i$.  A simplicial $1$-cycle with $\Z$ coefficients is by definition also a $(0, 1)$-cycle. A $(0, 1)$-cycle on $C_i$ yields two $(1, 1)$-floor cycles supported on  $F_{i+1, i}$ and $F_{i, i-1}$ by equipping it  with a vertical framing vector just as was done above. However, the two $(1, 1)$-cycles are homologous in $X_d$. To see this, first take a simplicial $2$-chain bounding the difference of the two underlying simplicial $1$-cycles. Equipping this $2$-chain with constant vertical framing, we obtain a $(1, 2)$-chain which bounds the two $(1, 1)$-floor cycles. Since we are interested in a basis we only take one of these two cycles.  
In Figure \ref{fig:alphabeta}, the blue cycle $\gamma$ is contained on the wall corresponding to $C_3$ and is homologous to the floor cycles just described which come from a cycle  on $C_3$. 
Let $C$ denote the set of floor cycles arising from all of the curves $C_i$ and contained in $F_{i+1, i}$ for  $1 \leq i <d$. 
In total on the surface $X_d$ there are 
$$ \frac{d^3 -6d^2 +11d - 6}{6}$$
such cycles. This is also exactly the second Betti number of $X_d$, $b_2(X_d)$

\subsubsection{Cycles joining floors}\label{subsubjoin}
Since $C_i$ is a non-singular tropical curve, we may choose a collection of $(1, 0)$-cycles on $C_i$ which represent classes dual to the basis of cycles chosen above for the $(0, 1)$-cycles to give the cycles in $C$, see Example \ref{ex:homocurve}. A $1$-framed point in $C_i$ gives a $(1, 1)$-cycle on $X_d$ in the following way:
A $1$-framed point $x \in C_{i}$ can be lifted to two framed points  $y \in F_{i+1, 1}$ and $y^{\prime} \in F_{i, i-1}$ both contained on edges of the floors.  Moreover, $y^{\prime} - y$ bounds a  $(1, 1)$-cell on the wall joining $F_{i+1, i}$ and $F_{i, i-1}$.  Now the framed point $x \in C_{i}$ is a boundary in $\TP^2$ of a $(1, 1)$-cell $\tau$. This cell $\tau$ has framing parallel to the framing of $x$ and endpoints on $x$ and any point of sedentarity $2$ of $\TP^2$, see Figure \ref{fig:alphabeta}.  The  projection $\pi: F_{i+1, i} \longrightarrow \TP^2$  is one-to-one so we may lift this $1$-cell in $\TP^2$ to a $1$-chain in $F_{i+1,i}$. Moreover, each cell in this chain can be equipped with a unique $1$-framing so that under the projection to $\TP^2$ it is the framing of the $(1, 1)$-cell bounding $x$. 
Now the boundary of this $1$-chain is the framed point $y$ and a collection of points on the $1$-skeleton of the floor equipped with the vertical framing. The framing of these points must be vertical since the projection $\pi$ applied to the framing of any cell is constant, it is the framing of $\tau$. These vertically framed points in $X_d$ are also boundaries. In fact, there is a bounding chain which lives in the portion of $X_d$ lying entirely above the floor $F_{i+1, i}$. This bounding chain consists of vertically framed $1$-chains supported on the walls of the surface $X_d$. These chains are connected at adjacent walls via the vertices of the surface.  Finally at the top floor $F_{1, 0}$ the vertically framed chain can be connected to three parallel framed rays in the directions $-e_1, -e_2, e_0$. This $(1, 1)$-chain then has boundary supported on $y$ and point of sedentarity matching their framings. Therefore the boundary is equal to the framed point on the floor $F_{i+1, i}$. 

Similarly, vertically framed points on $F_{i, i-1}$ are boundaries of vertically framed $(1, 1)$-chains supported on the walls of $X_d$ lying strictly below the floor $F_{i, i-1}$ and joined at vertices of $X_d$. The boundary of such a $(1, 1)$-chain is supported on the point on the floor $F_{i, i-1}$ as well as points on the boundary $z = -\infty$. Then the boundary as a $(1, 0)$-cycle is simply the framed point on the floor. The cycle $\beta$ in Figure \ref{fig:alphabeta} depicts the portion of such a cycle contained in the wall corresponding to $C_i$. 
As a part of the basis we take one such cycle for each class in $H_{1, 0}(C_i)$ for all of the curves $C_i$ in the floor plan of $X_d$. Therefore, we take 
$$b_2(X_d) =  \frac{d^3 -6d^2 +11d - 6}{6}$$
such cycles in the basis. 

These cycles come in pairs with cycles between the walls constructed above in \ref{subsubwalls} arising from classes in $H_{1, 0}(C_i)$. 
The pairs coming from dual classes in $H_{1, 0}(C_i)$ and $H_{0, 1}(C_i)$ will be denoted by $\beta$ and $\gamma$ respectively as in Figure \ref{fig:alphabeta}.

\comment{
$F_{d, d-1}$, and $y^{\prime}_i$ is a boundary in $F_{d-1, d-2}$. To see this, consider a path from $x_i \in C_{d-1} \subset \TP^2$ to a corner, as in the righthand side of  Figure \ref{fig:alphabeta}. Lifting this path to both $F_{d-1, d-2}$ and $F_{d, d-1}$ it bounds a collection of vertically framed points which are all homologous to zero in $X$. This produces a cycle $\beta_i$. 
The cycles joining the floors are very much}

\subsubsection{The vertical cycle} 
The last $(1, 1)$-cycle in the basis is supported on the walls of $X_d$ and the top floor $F_{1, 0}$. The $1$-chains supported on the walls are vertically framed and join adjacent walls via the vertices, as above in \ref{subsubjoin}.
On the top floor the $(1, 1)$-cycle is again a union of three parallely framed rays  in the directions $-e_1, -e_2,$ and $e_0$ also as above. 

\vspace{0.5cm}

In the next section we will prove that together the above cycles give a basis for $H_{1, 1}(X_d)$ by using a Mayer-Vietoris sequence and induction on the degree of the surface.

\subsection{Mayer-Vietoris sequence}\label{sec:H11}
For $X$  a tropical manifold in the sense of \cite{KMS} (or even more generally a tropical variety) the tropical homology groups satisfy a Mayer-Vietoris sequence similar to homology with constant coefficients. 
Here the sequence will only be applied to non-singular floor decomposed tropical surfaces $X \subset \TP^3$. 

\begin{proposition}
Let $A, B \subset X$ be open subsets such that $X = A \cup B$, then the following sequence of tropical homology groups is exact:
\begin{equation}\label{eq:MV} \cdots \longrightarrow H_{p, q+1}(X)  \overlim{\partial_{\ast}} H_{p,q}(A \cap B)  \overlim{i_{\ast}}  H_{p, q}(A) \oplus H_{p,q}(B)  \overlim{j_{\ast} - k_{\ast}} H_{p, q}(X)  \overlim{\partial_{\ast}} \cdots 
\end{equation}
\end{proposition}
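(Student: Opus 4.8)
The plan is to mimic the classical derivation of Mayer--Vietoris from a short exact sequence of chain complexes, carrying the framing coefficients along. Fix $p$. Since the boundary map $\partial_{p,q}$ lowers only the homological index $q$, for each fixed $p$ the groups $C_{p,\bullet}(\,\cdot\,)$ form a chain complex whose homology is the tropical $(p,q)$-homology. The framings $\F_p$ behave like the coefficients of a constructible co-sheaf on $X$: on each open face $\tau$ the group $\F_p(\tau)$ is constant, and the inclusion maps $i_{\tau,\sigma}$ supply the structure maps used in the definition of $\partial_{p,q}$. The whole argument is then the standard one for singular homology with local coefficients, and the only genuine content is checking that the subdivision machinery is compatible with the polyhedral-respecting condition and with the maps $i_{\tau,\sigma}$.

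First I would introduce the subcomplex $C_{p,\bullet}^{\{A,B\}}(X)\subset C_{p,\bullet}(X)$ spanned by those $(p,q)$-chains each of whose simplices lies entirely in $A$ or entirely in $B$. The diagonal inclusion and the difference map give a short exact sequence of chain complexes
$$0 \longrightarrow C_{p,\bullet}(A\cap B)\xrightarrow{\;i_\ast\;} C_{p,\bullet}(A)\oplus C_{p,\bullet}(B)\xrightarrow{\;j_\ast-k_\ast\;} C_{p,\bullet}^{\{A,B\}}(X)\longrightarrow 0,$$
whose exactness is formal: injectivity on the left and the identification of $\ker(j_\ast-k_\ast)$ with the diagonal are immediate, while surjectivity onto $C_{p,\bullet}^{\{A,B\}}(X)$ holds by the very definition of that subcomplex. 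The associated long exact homology sequence is precisely \eqref{eq:MV}, but with $H_{p,q}(X)$ replaced by the homology $H_{p,q}^{\{A,B\}}(X)$ of the subcomplex.

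The remaining, and only substantial, step is to show that the inclusion $C_{p,\bullet}^{\{A,B\}}(X)\hookrightarrow C_{p,\bullet}(X)$ induces an isomorphism on homology: the tropical analogue of the theorem of small simplices. I would adapt the classical barycentric subdivision operator $\mathrm{Sd}$ together with its chain homotopy $T$ satisfying $\partial T+T\partial=\mathrm{id}-\mathrm{Sd}$. The point that legitimizes this in the framed setting is locality: if a framed simplex $(\phi,a)$ respects the polyhedral structure and sits in a face $\tau$, then every simplex of its barycentric subdivision again lies in $\overline{\tau}$, still respects the polyhedral structure, and may be assigned the same coefficient $\phi\in\F_p(\tau)$. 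Thus on each summand $C_q(\tau)\otimes_{\Z}\F_p(\tau)$ both $\mathrm{Sd}$ and $T$ act as the classical operators tensored with the identity on $\F_p(\tau)$. Since the structure maps $i_{\tau,\sigma}$ are just the natural projections of the framing lattices, they commute with this tensor-identity extension, so the chain-level identities $\partial\,\mathrm{Sd}=\mathrm{Sd}\,\partial$ and $\partial T+T\partial=\mathrm{id}-\mathrm{Sd}$ pass verbatim to $(p,q)$-chains.

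Finally, since each singular simplex is compact and $\{A,B\}$ is an open cover, a Lebesgue-number argument shows that a high enough iterate $\mathrm{Sd}^N$ sends any given chain into $C_{p,\bullet}^{\{A,B\}}(X)$; combined with the chain homotopy this proves the inclusion is a quasi-isomorphism, and substituting $H_{p,q}^{\{A,B\}}(X)\cong H_{p,q}(X)$ into the long exact sequence yields \eqref{eq:MV}. The main obstacle I anticipate is the bookkeeping of the previous paragraph: one must confirm that the polyhedral-respecting condition is genuinely preserved under barycentric subdivision, so that the framing attached to each piece is well defined, and that $\mathrm{Sd}$ and $T$ really intertwine with the sedentarity-quotient maps $i_{\tau,\sigma}$ occurring in $\partial_{p,q}$, rather than with a naive boundary that forgets the coefficient system.
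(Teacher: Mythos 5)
Your proof is correct, and its skeleton---a short exact sequence of $(p,q)$-chain complexes followed by the standard long exact sequence in homological algebra---is the same as the paper's. The difference lies at the one substantive point, and there your route is genuinely distinct and more complete. The paper takes the third term of the short exact sequence to be $C_{p,q}(X)$ itself and asserts surjectivity of $j_{\ast}-k_{\ast}$ outright, on the grounds that ``any chain in $X_d$ may be written as a sum of two chains,'' one in each open set, before delegating the diagram chase to the constant-coefficient reference. Read literally, that surjectivity claim fails: $C_{p,q}(X)$ is freely generated by framed simplices, so a single simplex contained in neither $A$ nor $B$---for instance a $1$-simplex inside a closed vertical wall face running from its bottom edge (which lies on the floor $F_{d,d-1}$ and is excluded from $X^o_{d-1}$) to its top edge (which lies on the floor above and is excluded from $X^o_{d,d-1}$)---cannot be written as a sum of chains supported in $A$ and in $B$. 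Your version repairs this in the standard way: the quotient term is the subcomplex $C^{\{A,B\}}_{p,\bullet}(X)$ of chains subordinate to the cover, and the real content is the tropical analogue of the small-simplices theorem, namely that the inclusion $C^{\{A,B\}}_{p,\bullet}(X)\hookrightarrow C_{p,\bullet}(X)$ is a quasi-isomorphism. Your justification of that step is the right one: barycentric subdivision and its chain homotopy act on each summand $C_q(\tau)\otimes_{\Z}\F_p(\tau)$ as the classical operators tensored with the identity; sub-simplices of a simplex lying in $\overline{\tau}$ remain in $\overline{\tau}$, so both the polyhedral-respecting condition and the coefficient $\phi$ survive subdivision; and the sedentarity maps $i_{\tau,\sigma}$ enter only through $\partial_{p,q}$, where the required commutation is the same check needed to define the boundary operator in the first place. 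So what your argument buys is completeness---it proves exactly the statement that the paper's proof implicitly defers to the constant-coefficient case---at the cost of the subdivision bookkeeping; the paper's shorter proof is valid only if one reads its chain-level surjectivity claim as shorthand for your quasi-isomorphism step.
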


The maps above are as usual in the Mayer-Vietoris sequence. The  map $\partial$ comes from expressing a representative  $\tau$ of a class in $H_{p, q}(X)$
as a sum of two $(1, 1)$-chains $\alpha, \beta$, contained in $X_{d-1}^o$ and $X_{d, d-1}^o$ respectively, then $\partial_{\ast} \tau =  \partial \alpha  \in 
H_{p, q-1}(A \cap B)$.
As usual this is well-defined on homology classes.   
The maps
 $j_{\ast}$, $k_{\ast}$ are given by the respective inclusions of chains 
$$j: C_{p, q}(A)  \longrightarrow C_{p, q}(X), \qquad k: C_{p,q}(B) \longrightarrow C_{p, q}(X).$$

\begin{proof}
The proof follows the same way as in the case of constant coefficients,  see \cite{hAT}. 
First we prove that the following diagram is commutative and exact:
$$\xymatrix{ 0  \ar[r] &  C_{p, q}(A \cap B)   \ar[d]_{\partial}    \ar[r] & C_{p, q}(A) \oplus C_{p,q}(B)\ar[d]_{\partial}  \ar[r] & C_{p, q}(X) \ar[d]_{\partial} \ar[r] & 0  \\ 
0 \ar[r] & C_{p, q-1}(A \cap B)  \ar[r] &   C_{p, q-1}(A) \oplus C_{p,q-1}(B)   \ar[r] & C_{p, q-1}(X) \ar[r] & 0}$$
First we verify exactness in the short exact sequences and then commutativity.  If a  chain from $C_{p, q}(\tilde{C}_{d-1})$  is zero in both $C_{p, q}(X_{d-1}^o)$ and $C_{p,q}(X_{d, d-1}^o)$ it must be the zero chain.  Also it is clear by definition of the maps that $Im( i ) \subseteq Ker(j-k )$. If $(\tau_1, \tau_2) \in Ker(j-k)$, then $\tau_1 = \tau_2$ and so both $\tau_1, \tau_2$ must be chains in $C_{p, q}(\tilde{C}_{d-1})$ and $(\tau_1, \tau_2) = (\tau_1, \tau_1) = Im(i)$. Lastly, surjectivity at $C_{p, q}(X_d)$ follows  since any chain in $X_d$ may be written as a sum of two chains, one in $X^o_{d-1}, X^o_{d, d-1}$. 

The boundary operator commutes with the inclusion maps $i, j, k$, therefore the entire diagram is commutative. 
Now the long exact sequence follows from a standard diagram chase in homological algebra exactly the same as for the Mayer-Vietoris sequence for homology with constant coefficients, see \cite{hAT}. 
\end{proof}

For $X_d$ a non-singular floor decomposed tropical  surface  of degree $d$, recall the definitions of the associated polyhedral complexes $X_{d-1}^o$,  $X_{d, d-1}^o$ and $\tilde{C}_{d-1}$ from Section \ref{sec:floor}. 

\begin{lemma}\label{lem:othergroups}
For a non-singular floor decomposed surface $X_d$, we have
\begin{enumerate}[i)]
\item  $h_{1, 0}(X_{d-1}^o) = h_{1,0}(X_{d, d-1}^o) = 0$
\item $h_{1,0}(\tilde{C}_{d-1}) = g(C_{d-1})+1$ 
\item  $h_{1,1}(\tilde{C}_{d-1}) = g(C_{d-1})$
\item $h_{1, 2}(X_{d-1, d}) = 0$
\end{enumerate}
\end{lemma}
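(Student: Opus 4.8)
The four statements split into two genuine computations, (ii) and (iii), for the wall $\tilde C_{d-1}$, and two vanishing results, (i) and (iv). For the wall I would exploit that $\tilde C_{d-1}$ is a cylinder over the non-singular plane curve $C_{d-1}$, with the $\R$-factor pointing in the vertical direction $e_3$. The main tool is a splitting of the framing cosheaf: at every point of $\tilde C_{d-1}$ the vector $e_3$ spans a rank-one subgroup of $\F_1$ which is canonically identified with $\F_0 = \Z$, and the quotient is the pullback under the vertical projection $\pi \colon \tilde C_{d-1} \to C_{d-1}$ of the curve's framing cosheaf $\F_1(C_{d-1})$. Since $e_3$ is never annihilated along the wall (which does not meet $\{x_3 = -\infty\}$) and the horizontal directions tangent to $C_{d-1}$ give a consistent complement, this splitting is compatible with all the inclusion maps $i_{\tau,\sigma}$, so the $(1,q)$-chain complex of $\tilde C_{d-1}$ decomposes as a direct sum of a ``vertical'' and a ``horizontal'' subcomplex.

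The vertical subcomplex computes the ordinary homology $H_\bullet(\tilde C_{d-1};\Z) \cong H_\bullet(C_{d-1};\Z)$ with constant coefficients; it contributes the single class of a vertically framed point in degree $0$ and the $g = g(C_{d-1})$ loop classes, realised by the floor cycles of Section~\ref{subsubwalls}, in degree $1$. The horizontal subcomplex reproduces the curve homology $H_{1,\bullet}(C_{d-1})$ of Example~\ref{ex:homocurve}, contributing $H_{1,0}(C_{d-1}) \cong \Z^{g}$ in degree $0$ and nothing in degree $1$: the ``full curve'' generator of $H_{1,1}(C_{d-1})$ becomes a boundary once the curve is fattened to the wall, because the wall is open, with its ends running out to the boundary strata of $\TP^3$, so no compactly supported horizontal $(1,1)$-cycle survives. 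Adding the two contributions gives $h_{1,0}(\tilde C_{d-1}) = g + 1$ and $h_{1,1}(\tilde C_{d-1}) = g$, which are (ii) and (iii).

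For (i) I would show every $(1,0)$-cycle, that is every framed point, is a $(1,1)$-boundary. The mechanism is the ``joining'' construction of Section~\ref{subsubjoin}: a $1$-framed point is connected by a parallel $(1,1)$-chain running along the floors and walls out to a boundary hyperplane, where its framing vector lies in the span of the killed standard directions $\langle u_i : i \in S \rangle$ and is therefore annihilated by the sedentarity quotient. In $X_{d,d-1}^o$ one pushes vertical framings down to $\{x_3=-\infty\}$ and horizontal ones out to $H_1, H_2$; in $X_{d-1}^o$ one pushes everything up to the top floor $F_{1,0}$ and out along its three framed boundary rays. Since every framing can be transported to where it dies, both groups vanish.

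For (iv) I would argue that there is no nonzero $(1,2)$-cycle on the compact surface $X_{d,d-1}$. Its underlying $2$-chain is a cycle in $H_2(X_{d,d-1};\Z)$, which vanishes because $X_{d,d-1}$ is a compact surface with nonempty boundary (its five boundary curves); more precisely, along each boundary curve a facet has a free boundary edge, and closedness forces the image of its framing in the sedentarity-quotiented edge group to vanish, a constraint that non-singularity (three facets meet along every interior edge, with framings determined by their neighbours) propagates inward until all framings are $0$. The step I expect to demand the most care is the vanishing of the horizontal degree-$1$ contribution in (iii): making precise that fattening $C_{d-1}$ to the open wall trivialises its fundamental $(1,1)$-class is exactly where both the non-compactness and the floor-decomposed structure enter, and it is what separates the answer $g$ from the naive $g+1$.
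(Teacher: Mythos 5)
Your proposal has a fatal gap at its foundation: the claimed splitting $\F_1(\tilde C_{d-1})\cong \Z e_3\oplus \pi^*\F_1(C_{d-1})$, ``compatible with all the inclusion maps,'' does not exist. The wall does avoid $\{x_3=-\infty\}$, but it does meet the boundary plane $H_0$: the ends of $C_{d-1}$ in the direction $(1,1)$ lift to faces of the wall spanned by $(1,1,0)$ and $e_3$ whose closures lie in the stratum of sedentarity $\{0\}$, where $\F_1$ is the quotient by $u_0=(1,1,1)$. There $\F_1 = \langle (1,1,0),e_3\rangle/\langle (1,1,1)\rangle\cong\Z$, and the image of the horizontal generator $(1,1,0)$ equals $-[e_3]$; so $\Z e_3$ is not a direct summand compatibly with $i_{\tau,\sigma}$, and the chain complex does not decompose. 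This is not a repairable technicality: it is precisely the mechanism that makes (iii) equal $g$ rather than $g+1$. If your splitting were valid (i.e., if horizontal framings simply died at all ends of the wall), then by pushing forward along $\pi$ the horizontal subcomplex would compute $H_{1,\bullet}(C_{d-1})$, and the lift of the full-curve parallel cycle of Example \ref{ex:homocurve} at a fixed height would be a nontrivial compactly supported horizontal $(1,1)$-cycle — your ``the wall is open, so no such cycle survives'' argument cannot kill it, because bounding $(1,2)$-chains are finite, hence compact, and a cycle cannot be pushed off an open end. What actually happens is that this lift fails to close up: at the $H_0$-ends its framing $(1,1,0)$ does not vanish but equals $\mp e_3$, and any attempt to correct it by vertically framed chains leaves a boundary equal to a nonzero multiple of the vertically framed point class. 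This vertical/horizontal mixing is exactly what the paper's proof encodes in the observation that the three directions killed on $\partial\tilde C_{d-1}$, namely $(-1,0,0)$, $(0,-1,0)$, $(1,1,1)$, are \emph{not balanced} (they sum to $e_3$); the paper uses this to replace every $(1,1)$-class by a bounded, vertically framed representative and then pins down the rank by comparison with $H_1(\tilde C_{d-1},\partial\tilde C_{d-1})\cong\Z^{g}$. Your proposal has no counterpart to this step, and you yourself flagged it as the point needing care.

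The vanishing statements also rest on mechanisms that fail. For (i), transporting a framed point requires every cell of the bounding chain to carry a framing lying in $\F_1$ of the face containing it: the facets of a floor are sloped, so a constant horizontal framing such as $e_1$ is not available along them, and a framing such as $(1,1,0)$ does not die at $H_0$ but turns into $-e_3$; so ``push horizontal framings out to $H_1,H_2$'' does not go through as stated. The paper instead projects a horizontally framed point to $\TP^2$, where it bounds, lifts the bounding chain back by the graph of $f_d-f_{d-1}$, and then separately kills the leftover vertically framed boundary points at $\{x_3=-\infty\}$ — this projection-and-lift argument is the missing substance. For (iv), your opening claim that the underlying $2$-chain of a $(1,2)$-cycle is a cycle in $H_2(X_{d,d-1};\Z)$ is false: the boundary of a $(1,2)$-chain can vanish through cancellation of framings and through sedentarity quotients without the underlying simplicial chains being closed, so the argument never starts. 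The paper's proof instead pushes a $(1,2)$-cycle forward under $\pi$, invokes $h_{1,2}(\TP^2)=0$, and then rules out cycles supported in the vertical faces.
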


\begin{proof}
For the first statement, if a $1$-framed point is equipped with vertical framing $\pm e_3$ then the point $x$ on which it is supported must be contained in a face of 
$ X_{d, d-1}^o$ which either contains the vertical direction or is adjacent to a face of $X_{d, d-1}^o$ which does. Take as base of a $(1, 1)$-chain any path which joins the point $x$ to a point on the boundary hyperplane $z_3 = -\infty$, and which is contained only in the walls and vertices of $X_{d, d-1}^o$. Then this simplicial $1$-chain can be equipped with a vertical framing to obtain a $(1, 1)$-cycle. Its boundary is the vertically framed $(1,0)$-chain supported on $x$, so this $(1, 0)$-chain is homologous to zero.
 Therefore we may suppose the framing $v \in \Z^3$ of a $(1, 0)$-cycle is orthogonal to $e_3$, in other words $\langle v, e_3 \rangle =0$, where the brackets denote the standard inner product on $\R^3$.
Let $\pi: X_{ d,d-1}^o \longrightarrow \TP^2$ be the extension of  the projection in the vertical direction in an affine chart . The framed point $(\pi(\phi), \pi(p))$ is homologous to zero in $\TP^2$. We may lift the $(1, 1)$-chain which has $(\pi(\phi), \pi(p))$ as its boundary back to $X_{d, d-1}^o$ by taking the graph of the rational function $f_{d}-f_{d-1}$ from which $X_{d, d-1}^o$ is constructed to a $(1, 1)$-chain $\tau$ in $X^o_{d, d-1}$. 
Then the boundary is:
$$\partial \tau = ( \phi, p) + \sum_{i = 1}^k (\pm e_3, p_i),$$ where the points $p_i$ lie on the $1$-skeleton of $X_{d, d-1}^o$. All vertically framed points are homologous to $0$ by the above argument, and we have $(p, \phi) = \partial \tau^{\prime}$, and so it is also homologous to $0$.  A similar argument shows that $H_{1, 0}(X_{d-1}^o) = 0$. 

For the second statement,  first notice that a point with vertical framing $\pm e_3$ on $\tilde{C}_{d-1}$ represents a non-zero class in $H_{1, 0}(\tilde{C}_{d-1})$. 
If the framing of a point in $\tilde{C}_{d-1}$ is orthogonal to $e_3$, then it also represents a framed point on the underlying curve $C_{d-1}$. These cycles are independent in $\tilde{C}_{d-1}$, since they are independent in $C_{d-1}$.     From Example \ref{ex:homocurve} it follows that, 
$h_{1, 0}(\tilde{C}_{d-1}) = g(C_{d-1})+1$.  

For $H_{1, 1}(\tilde{C}_{d-1})$, we may assume every $(1, 1)$-class has a representative which is bounded. This is because there are only $3$ directions in $\Z^3$ which become zero in $\F_1$ on the boundary of $\tilde{C}_{d-1}$ and these $3$ directions are not balanced, namely they are $(-1, 0, 0), (0, -1, 0)$ and $(1, 1, 1)$. It also follows that for each cycle we may choose a representative which has constant vertical framing, so that the support of the $(1, 1)$-chain is also a cycle in homology with $\Z$ coefficients. 
Take $g(C_{d-1})$ vertically framed bounded $(1, 1)$-cycles corresponding to a basis of $H_1(\tilde{C}_{d-1})$.  
The claim is that these cycles remain independent in $\tilde{C}_{d-1}$. If a $(1,2)$-chain $\tau$ bounds some combination of these vertically framed $(1, 1)$-cycles $\sigma$, then as a simplicial $1$-chain the boundary of $\tau$ must be supported on the vertically framed cycles and the boundary $\partial \tilde{C}_{d-1}$. Ignoring the framings, we have $\partial \tau = \sigma$ in  the relative homology group $H_{1}(\tilde{C}_{d-1}, \partial\tilde{C}_{d-1})$. However,  $h_{1}(\tilde{C}_{d-1}, \partial\tilde{C}_{d-1}) = g(C_{d-1})$, and the vertically framed cycles remain independent in $H_{1, 1}(\tilde{C}_{d-1})$, so $h_{1, 1}(\tilde{C}_{d-1}) = g(C_{d-1})$.
 
 Finally, for $h_{1, 2}(X_{d, d-1})$ consider the projection $\pi: X_{d, d-1} \longrightarrow \TP^2$. A $(1, 2)$-cycle in $X_{d, d-1}$ may give rise to a $(1, 2)$-cycle in $X_{d, d-1}$ or its image might be of lower dimension. However, 
  $h_{1, 2}(\TP^2) = 0$, see \cite{Mik08}, so the image of the cycle must be of lower dimension. Then the cycle would be entirely contained in the vertical faces of $X_{d, d-1}$, but no chains contained entirely in these faces may  be closed.  This finishes the proof of the lemma. 
\end{proof}

The following lemma uses the fact that $X_{d, d-1}$ can be obtained by a tropical modification of $\TP^2$ along non-singular curves. 

\begin{lemma}\label{lem:H11floor}
The $(1, 1)$-homology of the surface $X_{d, d-1} \subset \T(\Delta)$ is $$H_{1, 1}(X_{d, d-1}) = \Z^{d(d-1)+1}.$$ 
\end{lemma}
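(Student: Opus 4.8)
The plan is to exploit the description of $X_{d,d-1}$ as the tropical modification $\delta\colon X_{d,d-1}\to \TP^2$ of the plane along the tropical rational function $g=f_d-f_{d-1}$, which is the tropical counterpart of blowing up $\CP^2$ at the $d(d-1)$ base points of the pencil spanned by $C_d$ and $C_{d-1}$ (this is exactly why $h_{1,1}$ should be $1+d(d-1)$, matching $h^{1,1}$ of that blow-up). First I would pin down the exceptional locus: the corner locus of $g$ is supported on $C_d\cup C_{d-1}$, with convex ridges along $C_d$ and concave ridges along $C_{d-1}$, and by condition (2) in the definition of a floor plan the curves $C_d$ and $C_{d-1}$ meet transversally in exactly $d(d-1)$ points $x_1,\dots,x_{d(d-1)}$. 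Over each crossing $x_k$ the modification produces a bounded exceptional $(1,1)$-cycle $e_k$ of the local shape described in Example \ref{negInt}, while over the smooth edges of $C_d$ and $C_{d-1}$ the vertical walls are contractible fins carrying no bounded $(1,1)$-class.

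Next I would record the candidate generators: the pullback $h=\delta^{*}[\ell]$ of the generator $[\ell]$ of $H_{1,1}(\TP^2)=\Z$ (the class of a tropical line, realised through the cycle map $Cyc$ of Section \ref{sec:cycmap}), together with the exceptional cycles $e_1,\dots,e_{d(d-1)}$. I would then set up the pushforward--pullback exact sequence of the modification,
$$0\longrightarrow \ker \delta_{*}\longrightarrow H_{1,1}(X_{d,d-1})\overset{\delta_{*}}{\longrightarrow} H_{1,1}(\TP^2)\longrightarrow 0,$$
which splits via $\delta^{*}$ since $\delta_{*}\delta^{*}=\mathrm{id}$; here $\delta_{*}$ kills the contracted walls (so $\delta_{*}e_k=0$) and sends $h$ to $[\ell]$. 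The vanishing $h_{1,2}(X_{d,d-1})=0$ from Lemma \ref{lem:othergroups} together with $h_{1,2}(\TP^2)=0$ from \cite{Mik08} is what makes this sequence behave. Identifying $H_{1,1}(X_{d,d-1})\cong \Z\,h\oplus \ker\delta_{*}$ reduces the lemma to showing $\ker\delta_{*}\cong\Z^{d(d-1)}$ with basis $e_1,\dots,e_{d(d-1)}$.

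To finish I would argue on two fronts. For independence I would compute the intersection form of Definition \ref{transInt} on $\{h,e_1,\dots,e_{d(d-1)}\}$: the $e_k$ have pairwise disjoint supports, so $e_k\cdot e_l=0$ for $k\neq l$; each $e_k^{2}=-1$ by the local computation of Example \ref{negInt}; and $h$ can be represented by a tropical line avoiding all crossings, giving $h\cdot e_k=0$ and $h^{2}=1$. The Gram matrix is $\mathrm{diag}(1,-1,\dots,-1)$, which is unimodular, so these $d(d-1)+1$ classes are $\Z$-independent and span a unimodular sublattice. For generation I would show that any class killed by $\delta_{*}$ is represented, after subtracting a suitable combination of the $e_k$, by a chain supported on the exceptional walls, and that the only bounded $(1,1)$-cycles living on those walls are the $e_k$; this forces $\ker\delta_{*}=\langle e_1,\dots,e_{d(d-1)}\rangle$.

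The main obstacle I expect is precisely this generation step, namely controlling $\ker\delta_{*}$ and proving that the modification introduces no $(1,1)$-classes beyond the exceptional cycles at the crossings. Showing that the vertical fins over the smooth edges of $C_d$ and $C_{d-1}$ contribute nothing, and that their gluing produces exactly one independent bounded cycle per crossing, is where the non-singularity of both curves and the transversality of their intersection (the floor-plan hypotheses) are essential. The unimodularity of the intersection form found above is then what upgrades ``spans a finite-index sublattice'' to ``equals $H_{1,1}(X_{d,d-1})$,'' simultaneously ruling out extra generators and any torsion and yielding $H_{1,1}(X_{d,d-1})=\Z^{d(d-1)+1}$.
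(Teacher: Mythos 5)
Your proposal follows essentially the same route as the paper's own proof: the same projection $\pi\colon X_{d,d-1}\to\TP^2$ viewed as a tropical modification (blow-up at the $d(d-1)$ crossings of $C_d$ and $C_{d-1}$), the same generators (the lifted line $\tilde L$ and the exceptional fibers $E_k$), independence established by the diagonal intersection matrix $\mathrm{diag}(1,-1,\dots,-1)$ of Lemma \ref{lem:blowupsig}, and generation by showing that any class, after subtracting its lift from $\TP^2$, is a vertically framed wall cycle and hence a combination of the $E_k$. Your pushforward--kernel exact sequence is just a repackaging of the paper's choose-a-good-representative, project, lift, and subtract argument, and the ``main obstacle'' you flag is exactly the step the paper resolves by normalizing representatives to have vertical framing off the floor and edge-parallel framing on it.
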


\begin{proof}
Consider the projection map $\pi: X_{d,d-1} \longrightarrow \TP^2$, which is given by extending the projection in the vertical direction $\R^3 \longrightarrow \R^2$ to the surface $X_{d, d-1} \subset \TP^3$. 
The claim is that   $H_{1, 1}(X_{ d,d-1})$ is generated by the parallel cycles given by  $E_k = \pi^{-1}(x_k)$ for $x_k \in C_d\cap C_{d-1}$, along with  a cycle $\tilde{L}$. 
To describe $\tilde{L}$, let    $L \subset \TP^2$ be a generic tropical line and consider $\pi^{-1}L \cap F_{d, d-1}$ this is a $(1, 1)$-chain whose boundary is a collection of vertically framed points, which occur when $L$ intersects either $C_d$ or $C_{d-1}$. Attach to these points of $\pi^{-1}L \cap F_{d, d-1}$ positively weighted rays in the $\pm e_3$ direction to obtain a closed balanced $1$-cycle $\tilde{L}$.

To show that these cycles generate the homology, take a $(1, 1)$-cycle $\alpha \in H_{1, 1}(X_{d, d-1})$ choose a representative such that all edges not contained on the floor have vertical framing and all edges contained on the floor have framing parallel to the edges of $F_{d, d-1}$. The projection $\pi(\alpha) \subset \TP^2$, can be made into a $(1, 1)$-cycle in the following way. If a supporting $1$-cell $f$ remains a $1$-cell after the projection $\pi$, then equip  $\pi(f)$ with the  framing $\pi(\phi_f)$. It is easy to check that this yields a closed $(1, 1)$-chain. Now consider the lift of $\pi(\alpha)$ just as we did for $L \subset \TP^2$, call this  lift  $\tilde{\alpha}$. If $\alpha \sim \tilde{\alpha}$ then $\alpha = k \tilde{L}$. Otherwise, $\alpha - \tilde{\alpha}$ is a non-trivial cycle, equipped only with the vertical framing so $\alpha - \tilde{\alpha}$  is a combination of $Cyc(E_k)$. In  Lemma \ref{lem:blowupsig} of the next section the intersection form is computed on the above cycles. Since the form is non-degenerate the $d(d-1) + 1$ cycles are independent in $H_{1, 1}(X_{d, d-1})$. 
This proves the lemma. 
\end{proof}

The map $\pi: X_{d, d-1} \longrightarrow \TP^2$ should be thought of as a tropical blowup at $d(d-1)$ points of sedentarity $\emptyset$. 
    Indeed it is the graph of a rational function, and it behaves as the blow up of $\TP^2$ along the common zeros of the curves $C_d, C_{d-1} \subset \R^2$.  The $1$-cycles $E_k$ are the exceptional divisors 
 and $\tilde{L}$ is the proper transform of a line. Notice that  the points of the blow up are not in general position as soon as $d >2$. 

\begin{lemma}\label{lem:ex2floor}
On the surface $X_{d, d-1}$ the difference of two cycles $E_s - E_t$ is homologous to a floor cycle. 
\end{lemma}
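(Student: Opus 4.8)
The plan is to exhibit an explicit, vertically framed $(1,2)$-chain whose boundary realizes the homology, using the product structure of the two walls of $X_{d,d-1}$ over the curves $C_d$ and $C_{d-1}$. Recall first the geometry: an exceptional divisor $E_k = \pi^{-1}(x_k)$ over an intersection point $x_k \in C_d \cap C_{d-1}$ is the full vertical fiber, meeting the floor $F_{d,d-1}$ in the vertex $v_k$ over $x_k$, with its lower half contained in the wall over $C_d$ (running to one boundary stratum of $\T(\Delta)$) and its upper half in the wall over $C_{d-1}$ (running to the opposite stratum); the cycle $Cyc(E_k)$ is this vertical line with constant framing $e_3$. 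The reason $Cyc(E_k)$ is closed is precisely that on both boundary strata the vertical direction is a standard direction $u_3$, so $e_3$ is quotiented to $0$ in $\F_1$ and the two ends carry trivial framing. I will exploit this same vanishing mechanism.

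First I would fix paths $P_d \subset C_d$ and $P_{d-1} \subset C_{d-1}$ in the $1$-skeleton of the floor joining $x_s$ to $x_t$; these exist because $C_d$, $C_{d-1}$ are connected and pass through every $x_k$, and they are exactly the ridge and valley of the graph defining $F_{d,d-1}$, so their concatenation is a floor cycle $\lambda_{st}$ in the sense of \ref{subsubfloors}. Next I build the $2$-chain $\Gamma := B_d - B_{d-1}$, where $B_d$ is the portion of the wall over $C_d$ lying over $P_d$ (the vertical strip swept by the lower halves of the fibers as the base point runs along $P_d$), and $B_{d-1}$ the analogous strip over $P_{d-1}$. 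Since both strips lie in the vertical walls, $e_3 \in \F_1$ on every one of their cells, so $\Gamma$ carries the constant framing $\phi = e_3$ and is a genuine $(1,2)$-chain respecting the polyhedral structure.

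Then I would compute $\partial_{1,2}\Gamma$. Each strip is a rectangle-type chain with four kinds of boundary edges: the two vertical sides over $x_s$ and $x_t$, the floor edge where the strip meets $F_{d,d-1}$ (namely $P_d$ for $B_d$ and $P_{d-1}$ for $B_{d-1}$), and the edge on the boundary stratum at the far end of the wall. Choosing orientations so that the two strips agree along the vertical fibers over $x_s$ and $x_t$, the vertical sides assemble, gluing the lower half from $B_d$ to the upper half from $B_{d-1}$ at the vertices $v_s, v_t$, into $Cyc(E_s) - Cyc(E_t)$; the floor edges $P_d$ and $P_{d-1}$, with their inherited vertical framing, form exactly the two arcs of $\lambda_{st}$; and the edges lying on the boundary strata carry framing $e_3 \equiv 0$ in $\F_1$ and hence vanish. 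This yields $\partial_{1,2}\Gamma = Cyc(E_s) - Cyc(E_t) - \lambda_{st}$, so that $E_s - E_t$ is homologous to the floor cycle $\lambda_{st}$.

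The hard part will be the orientation and framing bookkeeping rather than any deep geometry: I must check that the vertical sides genuinely glue across the floor-level vertices $v_s, v_t$ into a single cycle, that the two strips induce the \emph{same} vertical framing on the shared edges so that the contributions over $x_t$ cancel to $-E_t$ while those over $x_s$ reinforce to $+E_s$, and that the inclusion maps $i_{\tau,\sigma}$ send $e_3$ to $e_3$ on the floor edges adjacent to the walls but to $0$ on the boundary strata. Once the signs are fixed these verifications are routine, and it is exactly the vanishing of the far ends in $\F_1$ that makes the answer a nonzero floor cycle rather than $0$.
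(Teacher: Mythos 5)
Your proof is correct and follows essentially the same route as the paper: the paper likewise joins $x_s$ to $x_t$ by a path on $C_d$ and a path on $C_{d-1}$, concatenates their lifts into a vertically framed floor cycle, and asserts this cycle is homologous to $E_s - E_t$ up to orientation. The only difference is that you make explicit the bounding $(1,2)$-chain -- the vertically framed wall strips over the two paths, whose edges on the boundary strata vanish in $\F_1$ by the sedentarity quotient -- which the paper leaves implicit.
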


\begin{proof}
If $s = t$, then the difference is of course the zero cycle. If not the cycles $E_s$, $E_t$ come from distinct points $x_s, x_t \in C_d \cap C_{d-1}$. Take a path contained entirely on $C_{d-1}$ and joining the points $x_s$ and $x_t$. Similarly choose a path contained entirely on $C_{d}$ joining $x_s$ and $x_t$. Their union can be oriented to form a $1$-cycle and equipped with a vertical framing to give a $(1, 1)$-floor cycle $\alpha$, then up  to orientation $\alpha$ is homologous to $E_s- E_t$. 
\end{proof}

The above lemma provides a more convenient basis for $H_{1, 1}(X_{d, d-1})$ since the floor cycles are contained entirely in $X^o_{d, d-1}$. Choosing the appropriate differences $E_s - E_t$, we can obtain a new basis for $H_{1, 1}(X_{d, d-1})$ consisting of $\tilde{L}$ and cycles corresponding to the floor cycles in from Section \ref{subsubfloors}.

\begin{proposition}\label{lem:split}
Let $X_d$ be a  floor decomposed tropical surface of degree $d$, then the collection of cycles given in Section \ref{sec:basis} are a basis of $H_{1, 1}(X_d)$. 
\end{proposition}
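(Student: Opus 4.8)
The plan is to argue by induction on the degree $d$, using the Mayer--Vietoris sequence \eqref{eq:MV} applied to the decomposition $X_d = X^o_{d-1}\cup X^o_{d,d-1}$ with $X^o_{d-1}\cap X^o_{d,d-1}=\tilde C_{d-1}$. The base of the induction is a small-degree case: for $d=1$ the surface is a tropical plane and the only listed cycle is the vertical cycle, which one checks generates $H_{1,1}(X_1)\cong\Z$ (consistently with $h_{1,1}(X_1)=1$); it is worth also recording $d=2$ explicitly, where the first floor cycle appears. For the inductive step I would assume the stated cycles form a basis of $H_{1,1}$ in all degrees below $d$ and then feed the ingredients of Lemma \ref{lem:othergroups}, Lemma \ref{lem:H11floor} and Lemma \ref{lem:ex2floor} into the relevant strand of \eqref{eq:MV}.

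Concretely, since $H_{1,0}(X^o_{d-1})=H_{1,0}(X^o_{d,d-1})=0$ by Lemma \ref{lem:othergroups}(i), the $(1,\cdot)$-strand of \eqref{eq:MV} truncates to the exact sequence
$$H_{1,1}(\tilde C_{d-1})\xrightarrow{i_*}H_{1,1}(X^o_{d-1})\oplus H_{1,1}(X^o_{d,d-1})\xrightarrow{j_*-k_*}H_{1,1}(X_d)\xrightarrow{\partial}H_{1,0}(\tilde C_{d-1})\to 0,$$
into which I substitute $H_{1,1}(\tilde C_{d-1})\cong\Z^{g}$ and $H_{1,0}(\tilde C_{d-1})\cong\Z^{g+1}$ with $g=g(C_{d-1})$ (Lemma \ref{lem:othergroups}(ii),(iii)), together with the basis of $H_{1,1}(X_{d,d-1})\cong\Z^{d(d-1)+1}$ given by $\tilde L$ and the floor cycles (Lemma \ref{lem:H11floor}, \ref{lem:ex2floor}). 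The heart of the argument is to match the generators produced by the sequence with the explicit cycles of Section \ref{sec:basis}: the joining cycles $\beta$ associated to $C_{d-1}$ together with the vertical cycle should map under the connecting homomorphism $\partial$ to a basis of $H_{1,0}(\tilde C_{d-1})$ (the $g$ cycles $\beta$ hitting the framed-point classes of $H_{1,0}(C_{d-1})$ dual to $H_{0,1}(C_{d-1})$ as in Example \ref{ex:homocurve}, and the vertical cycle hitting the remaining vertically framed point class), so that $\partial$ is a split surjection contributing the summand $\Z^{g+1}$; the floor cycles on $F_{d,d-1}$ and the between-floor cycles $\gamma$ on $C_{d-1}$, together with all cycles inherited from $X_{d-1}$, should account for $\operatorname{im}(j_*-k_*)=\ker\partial$.

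For surjectivity (spanning) I would take an arbitrary class $\alpha\in H_{1,1}(X_d)$, subtract the combination of $\beta$-cycles and the vertical cycle prescribed by $\partial\alpha$ to land in $\ker\partial=\operatorname{im}(j_*-k_*)$, and then rewrite the result using the inductive description of $H_{1,1}(X^o_{d-1})$ and the basis of Lemma \ref{lem:H11floor}/\ref{lem:ex2floor} on the $X^o_{d,d-1}$ factor, reconciling the two descriptions on the overlap $\tilde C_{d-1}$ through $i_*$. For linear independence I would run the same decomposition in reverse: applying $\partial$ to a vanishing combination kills the $\beta$- and vertical-summands, since their $\partial$-images are a basis of $H_{1,0}(\tilde C_{d-1})$; the surviving relation is then controlled by the inductive hypothesis together with the non-degeneracy of the intersection form on $H_{1,1}(X_{d,d-1})$, while the coefficients of the $\gamma$-cycles are pinned down by pairing against their dual $\beta$-cycles, the paired construction of Example \ref{ex:homocurve} giving $\gamma_s.\beta_t=\pm\delta_{st}$. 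A final count confirming that the number of resulting generators is $\tfrac{d^3-4d+3}{3}+2\,b_2(X_d)+1=\tfrac{2d^3-6d^2+7d}{3}$ then shows no class is missing.

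I expect the main obstacle to be the precise determination of $\operatorname{rank}(i_*)$ --- equivalently, showing that the between-floor cycles $\gamma$ on $C_{d-1}$ remain independent after gluing and are not partially absorbed into the inherited classes. This is where the two open pieces genuinely interact, and where the delicate point that a vertically framed loop on the exposed wall $\tilde C_{d-1}$ becomes homologically essential only once the boundary hyperplane at $x_3=-\infty$ is reintroduced must be used; it is exactly this feature that forces $H_{1,1}(X^o_{d-1})$ to differ from $H_{1,1}(X_{d-1})$. Closely related is verifying that the joining cycles $\beta$, which thread across $\tilde C_{d-1}$ all the way down to the boundary, have the claimed $\partial$-images. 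Handling these two points carefully, rather than the otherwise routine diagram chase, is what makes the induction close.
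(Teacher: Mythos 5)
Your overall strategy coincides with the paper's: induction on $d$, the Mayer--Vietoris sequence for $X_d = X^o_{d-1}\cup X^o_{d,d-1}$ with overlap $\tilde C_{d-1}$, the truncation on the right coming from Lemma~\ref{lem:othergroups}(i), and the splitting of $\partial$ by the joining cycles $\beta$ and the vertical cycle (in the paper this is a map $l$ with $\partial_\ast l = \mathrm{id}$, whose injectivity is checked by slicing a bounding chain with a generic hyperplane $x_3 = c$). However, there is a genuine gap at exactly the point you flag as the ``main obstacle,'' and it is not just a matter of care: the middle term of the sequence is $H_{1,1}(X^o_{d-1})\oplus H_{1,1}(X^o_{d,d-1})$, the homology of the \emph{open} pieces, whereas you substitute for the second factor the basis $\tilde L, E_1,\dots,E_{d(d-1)}$ of the \emph{compact} surface $X_{d,d-1}$ from Lemma~\ref{lem:H11floor}. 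These groups differ: $\tilde L$ and the individual $E_k$ are not even supported in $X^o_{d,d-1}$, since they must reach the boundary curve $C_{d-1}$, which has been removed; and $H_{1,1}(X^o_{d,d-1})$ has rank $d(d-1)+g(C_{d-1})$, generated by the floor cycles $E_s-E_t$ of Lemma~\ref{lem:ex2floor} together with the vertically framed cycles $\gamma_i$ coming from $H_{1,1}(\tilde C_{d-1})$. The paper obtains this (and the analogous description of $H_{1,1}(X^o_{d-1})$ as the inductive basis of $H_{1,1}(X_{d-1})$ with $v$ removed and the $\gamma_i$ adjoined) by two \emph{further} Mayer--Vietoris arguments, gluing a collar neighbourhood $N_{C_{d-1}}$ of the boundary curve back onto each open piece and using that $H_{1,1}(\tilde C_{d-1})\to H_{1,1}(N_{C_{d-1}})$ is zero. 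It is precisely this computation that determines $i_\ast$: one then sees $i_\ast\gamma_i = (\gamma_i,\gamma_i)$, hence $i_\ast$ is injective, $H_{1,2}(X_d)=0$, the quotient by $\mathrm{im}(i_\ast)$ is torsion free, and $\ker(j_\ast-k_\ast)$ is identified. Without this step neither your spanning argument nor your independence argument can be closed, because both require knowing $\ker(j_\ast-k_\ast)=\mathrm{im}(i_\ast)$ explicitly; so the induction does not yet close as written.

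A secondary issue: you propose to pin down the coefficients of the $\gamma$-cycles by the pairing $\gamma_s.\beta_t=\pm\delta_{st}$. But the intersection product on $H_{1,1}(X_d)$ is only known, from \cite{KMS}, to be well defined on homology classes when $H_{1,1}(X_d)$ is torsion free, and torsion-freeness is part of what this proposition's proof establishes (in the paper it falls out of the two split exact sequences). Using the pairing at this stage is therefore circular unless freeness is proved first; replacing that step by the explicit form of $i_\ast$ obtained from the collar computation avoids the problem.
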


\begin{proof}
When $d=1$, $H_{1, 1}(X_1) = \Z$ follows as a special case of  Lemma \ref{lem:H11floor}.
In this case, the only cycle from Section \ref{sec:basis} is $v$ which is homologous to $\tilde{L}$. 
The proposition will be proved by induction on the degree of the surface and by applying the Mayer-Vietoris sequence \ref{eq:MV} to $$X_d = X^o_{d, d-1} \cup X^o_{d-1} \qquad \text{and} \qquad \tilde{C}_{d-1} =  X^o_{d, d-1} \cap X^o_{d-1}$$  with $p=1$. 
Then along with Lemma \ref{lem:othergroups} we  obtain 
$$ \dots \longrightarrow H_{1, 2}(X_d) \longrightarrow H_{1,1}(\tilde{C}_{d-1})  \longrightarrow H_{1, 1}(X_{d-1}^o) \oplus H_{1,1}(X_{d, d-1}^o)  \longrightarrow   
$$
$$\longrightarrow H_{1, 1}(X_d)  \longrightarrow H_{1,0}(\tilde{C}_{d-1}) \longrightarrow 0. $$ 

The floor cycles from Lemma \ref{lem:ex2floor} along with the class $\tilde{L}$ from Lemma \ref{lem:H11floor} form a basis for $X_{d, d-1}$. To find $H_{1, 1}(X^o_{d, d-1})$ we use this basis and  apply the Mayer-Vietoris sequence to 
$$X_{d, d-1} = X_{d, d-1}^o \cup N_{C_{d-1}} \qquad \text{and} \qquad \tilde{C}_{d-1} = X_{d, d-1}^o  \cap N_{C_{d-1}}$$
where $N_{C_{d-1}} \subset X_{d, d-1}$ is a neighborhood of the boundary curve $C_{d-1}$. From the long exact sequence we obtain:
$$0 \longrightarrow H_{1, 1}(\tilde{C}_{d-1}) \longrightarrow H_{1, 1}(N_{C_{d-1}}) \oplus H_{1, 1}(X_{d, d-1}^o) \longrightarrow Im(j_{\ast} - k_{\ast})  \longrightarrow 0.$$
However, the map $H_{1, 1}(\tilde{C}_{d-1})  \longrightarrow H_{1, 1}(N_{C_{d-1}})$ is zero, and we obtain the sequence 
$$0 \longrightarrow H_{1, 1}(\tilde{C}_{d-1}) \longrightarrow  H_{1, 1}(X^o) \longrightarrow \Z^{d(d-1)} \longrightarrow 0.$$ 
Where $\Z^{d(d-1)}$ is generated by a collection of independent floor cycles $E_s - E_t$. This gives a map 
$l: \Z^{d(d-1)} \longrightarrow H_{1, 1}(X_{d, d-1}^o)$ for which $j_{\ast}l$ is the identity. Therefore, the above sequence is split and the following cycles form a basis for $H_{1, 1}(X_{d, d-1}^o)$: $$\{ \alpha_1, \dots , \alpha_{i(i+1)} , \gamma_1, \dots , \gamma_{g(C_{d-1})} \}.$$ 
Here the $\alpha_i$'s are independent cycles of the form $E_s - E_t$ from Lemma \ref{lem:H11floor} and the cycles $\gamma_i$ come from independent $1$-cycles in $C_{d-1}$ equipped with vertical framing. 

Also using  the induction assumption we have a basis for $X_{d-1}$. Once again by applying the Mayer-Vietoris sequence to 
$$X_{d-1} = X_{d-1}^o \cup N_{C_{d-1}} \qquad \text{and} \qquad \tilde{C}_{d-1} = X_{d-1}^o  \cap N_{C_{d-1}}$$
we obtain as a basis  of $H_{1, 1}(X_{d-1}^o)$ consisting of the given basis for $H_{1, 1}(X_{d-1})$ except without 
the vertical cycle $v$ and along with vertically framed cycles $\{ \gamma_1, \dots , \gamma_{g(C_{d-1})} \}$ similar to above.  

Now the map $ H_{1,1}(\tilde{C}_{d-1})  \longrightarrow H_{1, 1}(X_{d-1}^o) \oplus H_{1,1}(X_{d, d-1}^o)  $ is given by $\gamma_i \mapsto (\gamma_i, \gamma_i)$, and so it is injective.  Therefore, $H_{1, 2}(X_d) = 0$, moreover the quotient $$\frac{H_{1,1}(X^o_{d-1, d}) \oplus H_{1, 1}(X^o_{d-1})}{i_{\ast}H_{1, 1}(\tilde{C}_{d-1})} $$ is torsion free. 
Section \ref{subsubjoin} constructed $(1, 1)$-cycles in $X_d$ from $(1, 0)$-cycles in $\tilde{C}_i$ for all $1\leq i < d$. This  gives a map $l : H_{1, 0}(\tilde{C}_i) \longrightarrow H_{1, 1}(X_d)$. We claim that  $l$ is  injective.  A $(2, 1)$-chain $\tau$ bounding $l(\sigma)$ for some $\sigma \in H_{1,0}(\tilde{C}_i)$ can be intersected with a hyperplane $z_3 = c$ for some constant $c$. For a generic choice of $c$ the intersection is $1$-dimensional and produces a $(1, 1)$-chain in $\tilde{C}_{d-1}$ bounding $\partial l(\sigma) = \sigma$. Thus $\sigma \sim 0$ and the map is injective. 

Moreover, the composition $\partial_{\ast} l$ is the identity on $H_{1, 0}(\tilde{C}_i)$ and thus  the following short exact  sequence is split
$$0 \longrightarrow  \frac{H_{1,1}(X^o_{d-1, d}) \oplus H_{1, 1}(X^o_{d-1})}{i_{\ast}H_{1, 1}(\tilde{C}_{d-1})} \longrightarrow H_{1, 1}(X) \longrightarrow H_{1, 0}(\tilde{C}_{d-1}) \longrightarrow 0.$$
Therefore there is the following isomorphism 
$$ H_{1,1}(X_d) \cong \frac{H_{1,1}(X^o_{d-1, d}) \oplus H_{1, 1}(X^o_{d-1})}{i_{\ast}H_{1, 1}(\tilde{C}_{d-1})} \oplus H_{1, 0}(\tilde{C}_{d-1}), $$
which  proves that the claimed collection of cycles forms a basis. 
\end{proof}

\vspace{0.5cm}

\begin{corollary}\label{cor:rkH11}
For $X_d \subset \TP^3$ a non-singular floor decomposed surface we have 
$$h_{1, 1}(X_d)  = \frac{2d^3 - 6d^2 + 7 d}{3}.$$
\end{corollary}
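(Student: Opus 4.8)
The plan is to derive the rank formula for $h_{1,1}(X_d)$ directly by counting the cycles exhibited in the basis of Proposition \ref{lem:split}, since that proposition already establishes that these cycles form a basis of $H_{1,1}(X_d)$. The work is therefore purely combinatorial bookkeeping: I would tally the four families of basis cycles constructed in Section \ref{sec:basis} and sum their cardinalities.

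First I would recall the count of each family. The cycles on floors from Section \ref{subsubfloors} (the collection $A$) number $\sum_{i=1}^{d-1}(i^2 + i - 1) = \frac{d^3 - 4d + 3}{3}$, as stated there. The cycles between floors from Section \ref{subsubwalls} (the collection $C$, one per independent cycle on each curve $C_i$) number $b_2(X_d) = \frac{d^3 - 6d^2 + 11d - 6}{6}$. The cycles joining floors from Section \ref{subsubjoin}, taken one per class in $H_{1,0}(C_i)$ for each $i$, again number $\frac{d^3 - 6d^2 + 11d - 6}{6}$, since $\dim H_{1,0}(C_i) = g(C_i)$ summed over $i$ gives the same total as the genus count. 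Finally there is the single vertical cycle, contributing $1$.

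Then I would add these:
$$h_{1,1}(X_d) = \frac{d^3 - 4d + 3}{3} + 2\cdot\frac{d^3 - 6d^2 + 11d - 6}{6} + 1.$$
Combining the two identical middle terms gives $\frac{d^3 - 6d^2 + 11d - 6}{3}$, so the sum becomes $\frac{(d^3 - 4d + 3) + (d^3 - 6d^2 + 11d - 6)}{3} + 1 = \frac{2d^3 - 6d^2 + 7d - 3}{3} + 1 = \frac{2d^3 - 6d^2 + 7d}{3}$, which is the claimed value. I would double-check the base case $d=1$ separately, where Proposition \ref{lem:split} gives $H_{1,1}(X_1)=\Z$ and the formula yields $\frac{2 - 6 + 7}{3} = 1$, consistent.

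The main obstacle is not analytic but arithmetical care: one must correctly match each counted family to its stated cardinality (in particular confirming that the genus sum $\sum_{i=1}^{d-1} g(C_i)$ for the non-singular plane curves $C_i$ of degree $i$ indeed equals $b_2(X_d)$, via $g(C_i) = (i-1)(i-2)/2$ from Example \ref{ex:homocurve}), and ensure no family is double-counted given the pairing remark at the end of Section \ref{subsubjoin}. Since the heavy lifting—linear independence and spanning—is already discharged by Proposition \ref{lem:split}, the corollary is essentially immediate once the arithmetic is verified.
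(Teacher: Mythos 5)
Your proposal is correct and follows essentially the same route as the paper's own proof: both invoke Proposition \ref{lem:split} and then simply tally the basis families, namely $\sum_{i=1}^{d-1}(i^2+i-1)=\frac{d^3-4d+3}{3}$ floor cycles, the $\beta$/$\gamma$ pairs contributing $2b_2(X_d)=\frac{d^3-6d^2+11d-6}{3}$, and the single vertical cycle $v$. Your extra verification that $\sum_{i=1}^{d-1}g(C_i)=\binom{d-1}{3}=b_2(X_d)$ is a correct and welcome cross-check, but otherwise the arguments coincide.
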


\begin{proof}
Following Proposition \ref{lem:split} it suffices to determine the size of the basis. 
Firstly, there are $i^2 + i -1$ cycles of the type from Section \ref{subsubfloors}  on a floor $F_{i+1, i}$. Therefore in total these contribute $$\frac{d^3 - 4d + 3}{3} =  \sum_{i=1}^{d-1} i^2 + i -1 $$ cycles to the basis. 
The next two types of cycles come in pairs and there are exactly $b_2(X_d)$ such pairs. So in total these contribute 
$$\frac{d^3 -6d^2 +11d - 6}{3}.$$
In addition there is the cycle $v$. Combining this we obtain the claimed rank of $H_{1, 1}(X_d)$. 
\end{proof}

The next corollary follows from a direct substitution. 

\begin{corollary} \label{cor:rankd-d-1}
For $X_d \subset \TP^3$ a non-singular floor decomposed surface we have 
 $$ h_{1, 1}(X_{d}) = h_{1, 1}(X_{d-1})  + d(d-1) + 2g(C_{d-1}) -1.$$
\end{corollary}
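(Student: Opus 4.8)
The plan is to obtain the recursion directly from the closed-form rank computed in Corollary \ref{cor:rkH11}, combined with the genus formula for non-singular tropical plane curves recalled in Example \ref{ex:homocurve}. Both sides of the asserted identity are explicit polynomials in $d$, so the statement reduces to a polynomial identity verified by a short expansion.

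First I would insert $h_{1,1}(X_d) = (2d^3 - 6d^2 + 7d)/3$ on the left-hand side at degrees $d$ and $d-1$ and subtract. The cubic terms cancel, leaving $h_{1,1}(X_d) - h_{1,1}(X_{d-1}) = 2d^2 - 6d + 5$. This is the only arithmetic required and it is entirely routine.

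Next I would evaluate the right-hand side. The wall curve $C_{d-1}$ is a non-singular tropical plane curve of degree $d-1$, so by Example \ref{ex:homocurve} its first Betti number is $g(C_{d-1}) = (d-2)(d-3)/2$. Substituting into $d(d-1) + 2g(C_{d-1}) - 1$ gives $(d^2 - d) + (d^2 - 5d + 6) - 1 = 2d^2 - 6d + 5$, which matches the left-hand side and proves the corollary.

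There is no real obstacle here: all the content already sits in Corollary \ref{cor:rkH11}, and this corollary is just the difference of two values of that polynomial. Conceptually the recursion is visible in the split Mayer-Vietoris sequence underlying Proposition \ref{lem:split}: passing from $X_{d-1}$ to $X_d$ contributes the floor cycles of the new bottom floor $F_{d,d-1}$ (accounting for the $d(d-1)$ term) together with the paired cycles $\beta,\gamma$ attached to $C_{d-1}$ (accounting for the $2g(C_{d-1})$ term), with a single unit of correction arising from the bookkeeping of the vertical cycle and of the groups of $\tilde{C}_{d-1}$ appearing in $h_{1,0}(\tilde{C}_{d-1})$ and $h_{1,1}(\tilde{C}_{d-1})$. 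The only thing one must watch in that route is the $\pm 1$ accounting across the split sequence, but the closed form of Corollary \ref{cor:rkH11} renders even that unnecessary, so the direct substitution is the cleanest argument.
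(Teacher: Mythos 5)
Your proposal is correct and matches the paper's own argument: the paper states that this corollary ``follows from a direct substitution,'' which is exactly your computation that both $h_{1,1}(X_d) - h_{1,1}(X_{d-1})$ and $d(d-1) + 2g(C_{d-1}) - 1$ equal $2d^2 - 6d + 5$, using Corollary \ref{cor:rkH11} and the genus formula $g(C_{d-1}) = (d-2)(d-3)/2$ from Example \ref{ex:homocurve}.
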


\end{section}

\begin{section}{The intersection form on $H_{1, 1}$}\label{sec:form}

The intersection form for a floor decomposed surface can be determined by induction on $d$ and the form on the surface $X_{d, d-1}$. 

\begin{lemma}\label{lem:blowupsig}
The intersection form on a non-singular tropical surface $X_{d, d-1} \subset \TT(\Delta)$ has signature $1-d(d-1)$. 
\end{lemma}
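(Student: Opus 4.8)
The plan is to compute the intersection form directly in the basis of $H_{1, 1}(X_{d, d-1})$ produced in Lemma \ref{lem:H11floor}, namely the $d(d-1)$ exceptional cycles $E_k = \pi^{-1}(x_k)$ together with the proper transform $\tilde{L}$ of a generic line. Since $\pi: X_{d, d-1} \longrightarrow \TP^2$ behaves as the tropical blow-up of $\TP^2$ at the $d(d-1)$ points $x_k \in C_d \cap C_{d-1}$, I expect the resulting Gram matrix to be the diagonal matrix $\mathrm{diag}(-1, \dots, -1, +1)$, exactly as for the classical blow-up of $\CP^2$ at $d(d-1)$ points, whose intersection form has signature $1 - d(d-1)$. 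I note that the points $x_k$ need not be in general position once $d > 2$, but this does not affect the matrix, which is diagonal independently of their configuration.

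First I would establish $E_k^2 = -1$ for each $k$. Each $E_k$ is a vertical parallel cycle of precisely the type appearing in Example \ref{negInt}: it is supported on a line in the direction $e_3$ over a point where $C_d$ and $C_{d-1}$ cross, framed parallel to its support. Moving $E_k$ to a homologous representative meeting it transversally, as in that example, and applying the transverse intersection rule of Definition \ref{transInt}, produces a single intersection point of multiplicity $-1$, so $E_k^2 = -1$. For distinct $s \neq t$ the cycles $E_s$ and $E_t$ are vertical fibres over distinct points of $\TP^2$; after a small transverse perturbation they remain disjoint, hence $E_s \cdot E_t = 0$.

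Next I would compute the products involving $\tilde{L}$. Choosing $L$ generic ensures it avoids every $x_k$, so $\pi^{-1}(L)$ does not meet any fibre $E_k = \pi^{-1}(x_k)$ over the floor, while the vertical rays of $\tilde{L}$ are attached at the points where $L$ meets $C_d$ or $C_{d-1}$, which are again distinct from the $x_k$; therefore $\tilde{L} \cdot E_k = 0$. For $\tilde{L}^2$ I would take a second generic line $L'$ and its proper transform $\tilde{L}'$. By tropical B\'ezout the lines $L$ and $L'$ meet transversally in a single point of $\TP^2$, which is generic and lifts to one point of the floor $F_{d, d-1}$ since $\pi$ is one-to-one there. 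At that point $\tilde{L}$ and $\tilde{L}'$ are parallel cycles crossing transversally in a facet with lattice index $1$, contributing $+1$; the vertical rays, attached over distinct base points, meet neither each other nor the opposite floor part. Hence $\tilde{L}^2 = 1$.

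Assembling these computations, the Gram matrix of the intersection form in the basis $\{E_1, \dots, E_{d(d-1)}, \tilde{L}\}$ is diagonal with $d(d-1)$ entries equal to $-1$ and one entry equal to $+1$. This form is non-degenerate, its positive part has rank $1$ and its negative part rank $d(d-1)$, giving signature $1 - d(d-1)$. The step I expect to be the main obstacle is the rigorous verification that $E_k^2 = -1$: a self-intersection requires exhibiting a homologous transverse representative and checking that the perturbation creates exactly one intersection point of the correct sign, which is precisely the local model of Example \ref{negInt}; equal care is needed to confirm that moving the cycles introduces no spurious intersection points elsewhere.
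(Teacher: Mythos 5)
Your proposal is correct and follows essentially the same route as the paper: it uses the basis $\{E_1,\dots,E_{d(d-1)},\tilde{L}\}$ from Lemma \ref{lem:H11floor}, computes $E_k^2=-1$ via the local model of Example \ref{negInt}, obtains $\tilde{L}^2=1$ by lifting a generic translate of $L$, and notes all cross terms vanish since the cycles are pairwise disjoint. The resulting diagonal Gram matrix $\mathrm{diag}(-1,\dots,-1,+1)$ gives the signature $1-d(d-1)$, exactly as in the paper's argument.
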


\begin{proof}Lemma \ref{lem:H11floor} gives generators for the $(1, 1)$-homology of $X_{d, d-1}$, 
they are $\tilde{L}, E_1, \dots , E_k$ for $k = d(d-1)$. All of these cycles are pairwise disjoint, so it remains to calculate their self-intersections. 
A cycle $E_i$ passes through a single vertex of the surface $X_{d, d-1}$, exactly as the cycle $\alpha$ in Figure \ref{fig:-1int}. As mentioned in Example \ref{negInt} the two cycles in this figure are homologous in $X_{d, d-1}$. So the self-intersection is calculated in Example \ref{negInt} and is $E_i^2 = -1$ for $1 \leq i \leq d(d-1)$. 

To find a cycle homologous to $\tilde{L}$ and intersecting it transversally it suffices to take a generic  translation of $L$, $L^{\prime} \subset \TP^2$ and lift it to $X_{d, d-1}$ as in the proof of Lemma \ref{lem:H11floor}. Therefore, $\tilde{L}^2 = 1$. 
This proves the lemma. 
\end{proof}

\begin{proof}[Proof of Theorem \ref{thm:H11floor}]
Proposition \ref{lem:split} gives a basis of $H_{1, 1}(X_d)$, so it 
 remains only to find the signature. 
By this same proposition $H_{1, 1}(X_d)$ is a direct sum of the groups $A_1, A_2, C, B$ and $\langle v \rangle$, where:
\begin{itemize}
\item $A_1$ is the subspace generated by  cycles contained entirely in $X_{d-1}^o$ and not in $H_{1, 1}(\tilde{C}_{d-1})$, $$A_1 \cong \frac{H_{1, 1}(X^o_{d-1})}{i_{\ast}H_{1, 1}(\tilde{C}_{d-1})}.$$
\item $A_2$ is the subspace generated by the floor cycles in $X_{d, d-1}^o$, 
$$A_2 \cong \frac{H_{1, 1}(X^o_{d, d-1})}{i_{\ast}H_{1, 1}(\tilde{C}_{d-1})}.$$
\item  $C$ is the subspace generated by the cycles $\gamma_i$ from $H_{1, 1}(\tilde{C}_{d-1})$ and $B$ is the subspace generated by their pairs  $\beta_i$. 
\item  $v$ is the $(1, 1)$-cycle arising from the vertically framed point in $H_{1, 0}(\tilde{C}_{d-1})$.
\end{itemize}
Recall the cycles from $C$ and $B$ arise in pairs. Let  $\beta$, $\gamma$, be such a pair of dual $(1, 1)$-cycles. 
The intersection form restricted to this pair is:
 $$
 \left(\begin{array}{cc}0 & 1 \\1 & \star \end{array}\right).$$ Otherwise the $\beta$ and $\gamma$ classes can be taken so that they are disjoint.  Also, the classes in $C$ are all pairwise disjoint, moreover, $C \bot A_1$, $C \bot A_2$. We also have  $A_1 \bot A_2$ and $v$ is orthogonal to $A_1, A_2, C$ and $B$. 
Therefore, the  intersection form on $H_{1, 1}(X_d)$ has the form:

\begin{center}
  \begin{tabular}{  c || c | c | c | c| c| }
&   $A_1$ & $A_2$ & $v$ & $B$ & $C$  \\
    \hline \hline 
   $A_1$ &  $\star$  & 0 & $ 0 $ & $\star$ &  0     \\ \hline
   $A_2$ &  0 & $\star$  & $ 0 $  & $\star$ & 0   \\ \hline
   $v$ &  $0 $  & $0 $  & $\star$  &$\star  $ & 0    \\ \hline
   B &$\star$ &$\star$  & $\star$ & $\star$ & $\begin{array}{ccc} 1&  &  \\ & \ddots &  \\ &  &1 \end{array}$ \\ \hline
  $C$ & 0 & 0 &  0& $\begin{array}{ccc}1&  &  \\ & \dots &  \\ &  & 1\end{array}$ &  0  \\ \hline 
  \end{tabular}
\end{center}
The form restricts to a form with index zero on the subspace $B \oplus C$. 
This is exactly the same situation as Novikov additivity of the signature for glueing  $4$-manifolds, see Theorem 5.3 of  \cite{Kir}. Just as in that case we obtain:
 $$Sign(X) = 1 + Sign(A_1) + Sign(A_2).$$
The form is negative definite on $A_2$   since it has a  basis given by floor cycles, as described by Lemma \ref {lem:ex2floor}, 
 so
$$Sign(A_2) = d(d-1) -1.$$ Now the space $A_1$ with intersection product is the same as  the orthogonal complement of the hyperplane section in $H_{1, 1}(X_{d-1}).$  Again for $d=1$ it can be verified that the form on $\Z = H_{1, 1}(X_1)$ is positive definite. Using the above and applying induction we have:
$$Sign(A_1) = 2 + 2b_2(X_{d-1}) - h_{1, 1}(X_{d-1}) .$$

Combining the above three equalities we obtain 
\begin{align}
Sign(X) = 2  + 2b_2(X_{d-1}) - h_{1, 1}(X_{d-1}) + d(d-1),
\end{align}
which reduces to $Sign(X) = 2 + 2b_2(X_{d}) - h_{1, 1}(X_{d})$ after substituting 
$b_2(X_d) = b_2(X_{d-1}) + b_1(C_{d-1})$ and applying Corollary \ref{cor:rankd-d-1}.  
This completes the proof of the theorem. \end{proof}

Floor decomposed surfaces are an instance of a  more general operation, known as the tropical sum of surfaces constructed in \cite{KMS}. This  construction allows two compatible tropical surfaces to be glued to yield a new one, see \cite{KMS} for more details.
The next corollary follows from the above proof and Lemma \ref{lem:blowupsig}. It  says the signature of the intersection forms of $X_d$, $X_{d-1}$, and $X_{d, d-1}$ are additive,  similar to Novikov additivity of the signature under connect sums, see \cite{Kir}.

\begin{corollary}\label{cor:addsig}
Let $X_d \subset \TP^3$ be a smooth tropical floor decomposed surface then, 
$$Sign(X_d) = Sign(X_{d-1}) + Sign( X_{d, d-1}),$$ where
 $X_{d-1}  \subset \TP^3$ and $X_{d, d-1} \subset \T(\Delta)$. 
\end{corollary}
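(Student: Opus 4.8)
The plan is to reduce the statement to two signature values already established: the closed form for $Sign(X_d)$ coming out of the proof of Theorem \ref{thm:H11floor}, and the value $Sign(X_{d,d-1}) = 1 - d(d-1)$ of Lemma \ref{lem:blowupsig}. Throughout I read $Sign$ as the single integer $p - n$, the rank of the positive definite part minus that of the negative definite part, which is the quantity for which Novikov-type additivity is meaningful. With this convention the signature $(1 + b_2(X_d),\, h_{1,1}(X_d) - 1 - b_2(X_d))$ of Theorem \ref{thm:H11floor} becomes $Sign(X_d) = 2 + 2 b_2(X_d) - h_{1,1}(X_d)$, and applying the same theorem to the lower degree surface gives $Sign(X_{d-1}) = 2 + 2 b_2(X_{d-1}) - h_{1,1}(X_{d-1})$.

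First I would assemble the two recursions relating the invariants of $X_d$ and $X_{d-1}$ that already appear in the proof of Theorem \ref{thm:H11floor}: the second Betti number satisfies $b_2(X_d) = b_2(X_{d-1}) + b_1(C_{d-1})$ with $b_1(C_{d-1}) = g(C_{d-1})$, and Corollary \ref{cor:rankd-d-1} gives $h_{1,1}(X_d) = h_{1,1}(X_{d-1}) + d(d-1) + 2 g(C_{d-1}) - 1$. Substituting both into $Sign(X_d) = 2 + 2 b_2(X_d) - h_{1,1}(X_d)$, the two copies of $2 g(C_{d-1})$ cancel, the $-1$ inside Corollary \ref{cor:rankd-d-1} contributes a $+1$, and what remains is precisely $\bigl(2 + 2 b_2(X_{d-1}) - h_{1,1}(X_{d-1})\bigr) + \bigl(1 - d(d-1)\bigr)$. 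This is $Sign(X_{d-1}) + Sign(X_{d,d-1})$ once $1 - d(d-1)$ is read off from Lemma \ref{lem:blowupsig}, which finishes the argument. This route is purely arithmetic and needs no new geometry beyond the two cited results.

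Alternatively, and this is what ``follows from the above proof'' most directly points to, one can argue structurally from the block form of the intersection pairing obtained in the proof of Theorem \ref{thm:H11floor}. There, Novikov additivity across the index-zero subspace $B \oplus C$ produced $Sign(X_d) = 1 + Sign(A_1) + Sign(A_2)$. Here $A_1$ is the orthogonal complement of the positive rank-one hyperplane class in $H_{1,1}(X_{d-1})$, so that $Sign(X_{d-1}) = 1 + Sign(A_1)$; and $A_2$ is the negative definite span of the floor cycles $E_s - E_t$, whose signature equals $Sign(X_{d,d-1})$ as computed in Lemma \ref{lem:blowupsig}. The two stray units cancel and additivity follows.

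I expect the only real obstacle to lie in this second, structural route: one must track the ranks and signs of $A_1$ and $A_2$ carefully enough that the $+1$ built into the Novikov formula, the $+1$ absorbed by the hyperplane class of $X_{d-1}$, and the value $1 - d(d-1)$ of Lemma \ref{lem:blowupsig} line up exactly. Because the first route sidesteps this bookkeeping entirely by working only with the final closed forms, that is the proof I would actually write down, keeping the structural description as an interpretive remark explaining why the three signatures should add.
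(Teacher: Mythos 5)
Your proposal is correct and is essentially the paper's own argument: the paper justifies the corollary exactly by combining the decomposition $Sign(X_d) = 1 + Sign(A_1) + Sign(A_2)$ from the proof of Theorem \ref{thm:H11floor} (your structural route) with Lemma \ref{lem:blowupsig}, and your arithmetic route is the same identity rewritten through the closed forms and Corollary \ref{cor:rankd-d-1}. As a side remark, your bookkeeping silently corrects two compensating slips in the printed proof of Theorem \ref{thm:H11floor}, where $Sign(A_2)$ should read $1 - d(d-1)$ rather than $d(d-1) - 1$, and $Sign(A_1)$ should be $Sign(X_{d-1}) - 1 = 1 + 2b_2(X_{d-1}) - h_{1,1}(X_{d-1})$.
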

\end{section}

The difference in the tropical and classical intersection forms is to be expected given  the relation of the tropical $(p, q)$-homology groups of $X_d$ to a filtration of $H^2(\X_d)$ for a  non-singular complex surface $\X \subset \CP^3$ of the same degree, see  \cite{ItKaMiZh}.  
There is also a version of the Hodge index theorem for algebraic cycles, see Section 5.1 of \cite{Hart}.  It says that the intersection pairing  restricted to algebraic cycles of a non-singular projective surface $\X$ has signature $(1, r-1)$ where $r$ is the rank of the Picard group of $\X$. It is so far unknown if the analogous statement holds for the group of tropical $1$-cycles  of a non-singular tropical surface $X$ modulo rational equivalence.

\small
\def\rightmark{\em Bibliography}

\bibliographystyle{alpha}
\bibliography{Biblio.bib}

\end{document}